\newtheorem{theorem}{Theorem}[section]
\newtheorem{lemma}[theorem]{Lemma}
\newtheorem{corollary}[theorem]{Corollary}
\theoremstyle{definition}
\newtheorem{definition}[theorem]{Definition}
\theoremstyle{remark}
\newtheorem{remark}[theorem]{Remark}
\numberwithin{equation}{section}
\begin{document}

\title[The Hyers--Ulam stability and the Moore--Penrose inverse]{Relationship between the Hyers--Ulam stability and the
Moore--Penrose inverse}
\author[Q. Huang, M.S. Moslehian]{Qianglian Huang$^{1*}$ and Mohammad Sal Moslehian$^2$}
\address{$^1$ College of Mathematics, Yangzhou University,
Yangzhou 225002, China; School of Mathematical Sciences, Monash
University, VIC 3800, Australia} \email{qlhmath@yahoo.com.cn}
\address{$^2$ Department of Pure Mathematics, Ferdowsi University of
Mashhad, P.O. Box 1159, Mashhad 91775, Iran.}
\email{moslehian@um.ac.ir and moslehian@memebr.ams.org}
\urladdr{\url{http://profsite.um.ac.ir/~moslehian/}}

\subjclass[2010]{47A55, 46A32, 39B82, 47A05, 47A30.}

\keywords{Hyers--Ulam stability; Moore--Penrose inverse; generalized
inverse; reduced minimum modulus; closed linear operator;
$T-$boundedness; semi-Fredholm operator.}

\begin{abstract}
In this paper, we establish a link between the Hyers--Ulam stability
and the Moore--Penrose inverse, that is, a closed operator has the
Hyers--Ulam stability if and only if it has a bounded Moore--Penrose
inverse. Meanwhile, the stability constant can be determined in
terms of the Moore--Penrose inverse. Based on this result, some
conditions for the perturbed operators having the Hyers--Ulam
stability are obtained and the Hyers--Ulam stability constant is
expressed explicitly in the case of closed operators. In the case of
the bounded linear operators we obtain some characterizations for
the Hyers--Ulam stability constants to be continuous. As an
application, we give a characterization for the Hyers--Ulam
stability constants of the semi-Fredholm operators to be continuous.
\end{abstract}
\maketitle


\section{Introduction and Preliminaries}

More than a half century ago, Ulam \cite{ULA} proposed the first
stability problem concerning group homomorphisms, which was
partially solved by Hyers \cite{HYE} in the framework of Banach
spaces. Later, Aoki \cite{AOK} proved the stability of the additive
mapping and Th.M. Rassias \cite{TRAS} investigated the stability of
the linear mapping for mappings $f$ when the norm of the Cauchy
difference $f(x+y) - f(x) - f(y)$ is bounded by the expression
$\varepsilon(\|x\|\sp p+\|y\|\sp p)$ for some $\varepsilon\ge 0$ and
some $ 0 \leq p < 1 $. J.M. Rassias \cite{JRAS} considered the same
problem with $\varepsilon(\|x\|\sp p\,\|y\|\sp p)$. A large number
of papers have been published in connection with various
generalizations of Hyers--Ulam theorem in several wide frameworks.
In particular, it is nearly related to the notion of perturbation
\cite{CIMS, Met} and geometry of Banach spaces \cite{DM, TX}. The
interested reader is referred to books \cite{CZE, H-I-R, JUN, MOS}
and references therein.

In 2003, Miura, Miyajima and Takahasi \cite{13, 14} investigated the
notion of the Hyers--Ulam stability of a mapping between normed
linear spaces and obtained some stability results for particular
linear differential operators. Takagi, Miura and Takahasi \cite{19}
considered the Hyers--Ulam stability of bounded linear operators in
Banach spaces. After then, Hirasawa and Miura \cite{5} gave some
necessary and sufficient conditions under which a closed operator in
a Hilbert space has the Hyers--Ulam stability. Moslehian and Sadeghi
\cite{M-S} studied the Hyers--Ulam stability of $T$-bounded
operators. They also discussed the best constant of Hyers--Ulam
stability. In the sequel, we need some terminology.
\begin{definition}\label{Def1.1} Let $X,Y$ be normed
linear spaces and let $T$ be a (not necessarily linear) mapping from
$X$ into $Y$. We say that $T$ has the Hyers--Ulam stability if there
exists a constant $K>0$ with the property: (a) For any $y$ in the
range $R(T)$ of $T$, $\varepsilon>0$ and $x$ in the Domain $D(T)$ of
$T$ with $\|Tx-y\|\leq\varepsilon$, there exists an element $x_0\in
D(T)$ such that $Tx_0=y$ and $\|x-x_0\|\leq K\varepsilon$. We call
such $K>0$ a Hyers--Ulam stability constant for $T$ and denote by
$K_T$ the infimum of all Hyers--Ulam stability constants for $T$. If
$K_T$ is a Hyers--Ulam stability constant for $T$, then $K_T$ is
called the Hyers--Ulam stability constant for $T$.
\end{definition}
Roughly speaking, if $T$ has the Hyers--Ulam stability, then to each
$\varepsilon$-approximate solution $x$ of the equation $Tx=y$ there
corresponds an exact solution $x_0$ of the equation in a
$K_\varepsilon$-neighborhood of $x$; see \cite{5}.

 \begin{remark} \cite{5, M-S}
If $T$ is linear then condition (a) is equivalent to:\\
(b) For any $\varepsilon>0$ and $x\in D(T)$ with
$\|Tx\|\leq\varepsilon$, there exists $x_0\in D(T)$ such that
$Tx_0=0$ and $\|x-x_0\|\leq K\varepsilon$.\\If we denote the null
space of $T$ by $N(T)$, then the condition (b) is equivalent
to:\\
(c) For any $x\in D(T)$, there exists $x_0\in N(T)$ such that
$\|x-x_0\|\leq K\|Tx\|$.
\end{remark}

Let $X, Y$ be Banach spaces. Let $L(X, Y)$, $C(X, Y)$ and $B(X, Y)$
denote the linear space of all linear operators, the homogeneous set
of all closed linear operators with a dense domain and the Banach
space of all bounded linear operators from $X$ into $Y$,
respectively. The identity operator is denoted by $I$. Let us
introduce the reduced minimum modulus of closed linear operators.
\begin{definition}\label{Def1.2} \cite{11} The reduced minimum modulus of $T\in C(X, Y)$ is defined by
$$\gamma(T)=\inf\{\|Tx\|: \quad x\in D(T)\ with \ d(x,N(T)):=\inf_{z\in N(T)}d(x,z)=1\}.$$
\end{definition}
\ It is easy to see that $\gamma(T)=\sup\{\alpha\geq 0: \|Tx\|\geq\alpha\ d(x,N(T)),\ x\in D(T)\}$ and if $X$ and $Y$ are Hilbert spaces, then
$$\gamma(T)=\inf\{\|Tx\|: \quad x\in D(T)\cap {N(T)}^\perp\ with \ \|x\|=1\},$$
where $\perp$ denotes the orthogonal complement in Hilbert spaces;
see also \cite{2}.
\begin{theorem}\label{The1.1} \cite{5} Let $X, Y$ be Hilbert spaces and $T\in C(X,Y)$. Then
$T$ has the Hyers--Ulam stability if and only if it has closed range.
In this case, $K_T=\gamma(T)^{-1}$.
\end{theorem}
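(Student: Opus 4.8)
The plan is to exploit the linearity of $T$: by the Remark, the Hyers--Ulam stability of $T$ is equivalent to condition (c), namely that for every $x\in D(T)$ there exists $x_0\in N(T)$ with $\|x-x_0\|\le K\|Tx\|$. My strategy is to recast (c) as a single scalar inequality relating $\|Tx\|$ to the distance $d(x,N(T))$, and then to read off both the closed-range criterion and the exact value of $K_T$ directly from the reduced minimum modulus $\gamma(T)$.

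First I would note that, since $T$ is closed, $N(T)$ is a closed subspace of the Hilbert space $X$; hence for each $x\in D(T)$ the distance $d(x,N(T))$ is attained at the nearest point $x_0=P_{N(T)}x\in N(T)\subseteq D(T)$. This is exactly where the Hilbert space hypothesis is indispensable: it upgrades the mere inequality $d(x,N(T))\le K\|Tx\|$ into the genuine existence of an element $x_0$ with $\|x-x_0\|=d(x,N(T))\le K\|Tx\|$ and $Tx_0=0$. Consequently, condition (c) with constant $K$ holds if and only if $d(x,N(T))\le K\|Tx\|$ for all $x\in D(T)$.

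Next I would compare this inequality with the supremum characterization of $\gamma(T)$ recorded just after Definition~\ref{Def1.2}, namely $\gamma(T)=\sup\{\alpha\ge 0:\ \|Tx\|\ge\alpha\,d(x,N(T)),\ x\in D(T)\}$. The requirement $d(x,N(T))\le K\|Tx\|$ for all $x\in D(T)$ is precisely $K^{-1}\le\gamma(T)$, that is, $K\ge\gamma(T)^{-1}$. Thus a finite admissible $K$ exists if and only if $\gamma(T)>0$, and in that case the set of admissible constants is exactly $[\gamma(T)^{-1},\infty)$; its infimum $\gamma(T)^{-1}$ is itself admissible, since the projection $x_0=P_{N(T)}x$ realizes the bound. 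This yields $K_T=\gamma(T)^{-1}$.

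Finally, to translate $\gamma(T)>0$ into the closed-range statement, I would invoke the classical fact for densely defined closed operators that $R(T)$ is closed if and only if $\gamma(T)>0$ (see Kato \cite{11}). Combining this with the preceding step gives the stated equivalence together with the formula for the stability constant. I expect the main obstacle to lie not in any lengthy computation but in the careful handling of the existential content of condition (c): one must guarantee that the bound is achieved by an actual element of $N(T)$ rather than merely approached in the infimum, which is exactly what orthogonal projection supplies here and what would generally fail in an arbitrary Banach space.
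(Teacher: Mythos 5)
Your argument is correct, but it follows a genuinely different route from the one the paper takes. The paper does not prove Theorem~\ref{The1.1} directly: it recovers it as Corollary~\ref{Cor2.1} of Theorem~\ref{The2.1}, whose proof runs entirely through the Moore--Penrose inverse. There, the exact solution is exhibited as $x_0=(I-T^\dagger T)x$, the stability constant is identified as $K_T=\|T^\dagger\|$ by testing condition (c) on vectors of the form $T^\dagger y\in D(T)\cap N(T)^\perp$, the equivalence with closed range is obtained from the Closed Graph Theorem applied to $T^\dagger$ on $D(T^\dagger)=R(T)+R(T)^\perp$, and only at the end is $\|T^\dagger\|=\gamma(T)^{-1}$ verified. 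You instead bypass generalized inverses entirely: you reduce condition (c) to the scalar inequality $d(x,N(T))\le K\|Tx\|$ via the nearest-point projection onto the closed subspace $N(T)$, read off $K_T=\gamma(T)^{-1}$ from the supremum characterization of $\gamma(T)$, and quote the classical fact from Kato \cite{11} that for a densely defined closed operator $R(T)$ is closed if and only if $\gamma(T)>0$. Your identification of where the Hilbert structure is genuinely needed (attainment of the distance, so that the infimum $\gamma(T)^{-1}$ is itself an admissible constant) is exactly right, and your argument that the set of admissible constants is the closed interval $[\gamma(T)^{-1},\infty)$ is sound. What you lose relative to the paper is precisely its point: the formula $K_T=\|T^\dagger\|$ and the explicit minimizer $(I-T^\dagger T)x$, which are what drive the perturbation results in Sections 2 and 3; what you gain is a shorter, more elementary proof of the stated equivalence that is closer in spirit to the original argument of \cite{5}.
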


Let us introduce the notion of a generalized inverse (see e.g. \cite{g1}) and that of the Moore--Penrose inverse of a closed operator.

\begin{definition}\label{Def1.3} \cite{g1} An operator $T\in C(X, Y)$ possesses a (bounded) generalized inverse if there exists an operator $S \in B(Y, X)$ such that $R(S)\subseteq D(T)$ and (1) $TSTx = Tx$ for all $x \in D(T)$; (2) $STSy = Sy$ for all $y \in Y$; (3) $ST$ is continuous.
We denote a generalized inverse of $T$ by $T^+$.
\end{definition}
 In general, the generalized inverse need not exist and is not unique even if it exists. We need the following lemma concerning the existence of generalized inverses.
\begin{lemma}\label{Lem1.1} \cite{17}
$(a)$ Let $T\in C(X,Y)$.
Suppose that $N(T)$ has a topological complement $N(T)^{c}$ in $X$
and $\overline{R(T)}$ has a topological complement
$\overline{R(T)}^{c}$ in $Y$, i.e., $$X=N(T)\oplus N(T)^{c}\quad and
\quad Y=\overline{R(T)}\oplus \overline{R(T)}^{c}.$$ Let $P$ denote
the projector of $X$ onto $N(T)$ along $N(T)^{c}$ and $Q$ denote the
projector of $Y$ onto $\overline{R(T)}$ along $\overline{R(T)}^{c}$.
Then there is a unique $S \in C(Y,X)$ satisfying: $1)$
$TST=T$ on $D(T)$;\quad $2)$ $STS=S$ on $D(S)$; $3)$ $ST=I-P$ on
$D(T)$ and $4)$ $TS=Q$ on $D(S)$, where $D(S)=R(T)+\overline{R(T)}^{c}$.
\par\noindent $(b)$ Under the assumptions of part
$(a)$, $S$ is bounded if and only if\ $R(T)$ is closed. In this case, $S$ is a bounded generalized inverse of $T$ with $D(S)=Y$, $N(S)=R(T)^c$ and $R(S)=D(T)\cap N(T)^c$.
\end{lemma}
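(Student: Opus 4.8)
The plan is to construct $S$ explicitly from the two direct-sum decompositions and then read off all four identities, uniqueness, and the boundedness criterion. The starting observation is that the restriction $T_{0}:=T|_{D(T)\cap N(T)^{c}}$ is injective: if $x\in D(T)\cap N(T)^{c}$ and $Tx=0$ then $x\in N(T)\cap N(T)^{c}=\{0\}$. Moreover $T_{0}$ maps onto $R(T)$, because for any $x\in D(T)$ one has $Px\in N(T)\subseteq D(T)$, hence $(I-P)x\in D(T)\cap N(T)^{c}$ and $T(I-P)x=Tx$ (as $T(Px)=0$). Thus $T_{0}$ is a linear bijection of $D(T)\cap N(T)^{c}$ onto $R(T)$, and I would set $S_{0}:=T_{0}^{-1}$. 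Using $Y=\overline{R(T)}\oplus\overline{R(T)}^{c}$, every $y\in D(S):=R(T)+\overline{R(T)}^{c}$ has a \emph{unique} representation $y=r+c$ with $r\in R(T)$, $c\in\overline{R(T)}^{c}$ (uniqueness because $R(T)\cap\overline{R(T)}^{c}\subseteq\overline{R(T)}\cap\overline{R(T)}^{c}=\{0\}$), and I define $Sy:=S_{0}r=S_{0}(Qy)$.

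The four identities then follow by direct computation. For $x\in D(T)$ we have $STx=S_{0}(Tx)=(I-P)x$, since $(I-P)x$ is the unique preimage of $Tx$ lying in $D(T)\cap N(T)^{c}$; this is identity $(3)$, and applying $T$ gives $TSTx=T(I-P)x=Tx$, which is $(1)$. For $y=r+c\in D(S)$ one has $Qy=r$, so $TSy=TS_{0}r=r=Qy$, which is $(4)$, and then $STSy=S(Qy)=S_{0}r=Sy$, which is $(2)$. Closedness of $S$ is checked from the closedness of $T$ and the boundedness of $P,Q$: if $y_{n}\to y$ in $D(S)$ and $Sy_{n}\to w$, then $T(Sy_{n})=Qy_{n}\to Qy$, so $w\in D(T)\cap N(T)^{c}$ with $Tw=Qy\in R(T)$, whence $y\in D(S)$ and $Sy=w$. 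For uniqueness, suppose $S'\in C(Y,X)$ satisfies $(1)$--$(4)$ on the same domain. From $(2)$ and $(3)$, $S'y=S'(TS'y)=(I-P)(S'y)$, so $P(S'y)=0$ and $S'y\in N(T)^{c}$; likewise $Sy\in N(T)^{c}$. Identity $(4)$ gives $T(S'y-Sy)=Qy-Qy=0$, so $S'y-Sy\in N(T)$. Hence $S'y-Sy\in N(T)\cap N(T)^{c}=\{0\}$, proving $S'=S$.

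For part $(b)$ I would relate the boundedness of $S$ to the closedness of $R(T)$ through the reduced minimum modulus. If $R(T)$ is closed, then $\overline{R(T)}=R(T)$, so $D(S)=R(T)+\overline{R(T)}^{c}=Y$. For $r\in R(T)$ put $w=S_{0}r\in D(T)\cap N(T)^{c}$, so $Tw=r$; since $w\in N(T)^{c}=N(P)$ we get $w=(I-P)w$, and for any $z\in N(T)$, $\|w\|=\|(I-P)(w-z)\|\le\|I-P\|\,\|w-z\|$, giving $d(w,N(T))\ge\|w\|/\|I-P\|$. Combined with $\gamma(T)\,d(w,N(T))\le\|Tw\|=\|r\|$ and with $\gamma(T)>0$ (which holds precisely because $R(T)$ is closed), this yields $\|Sr\|=\|w\|\le\frac{\|I-P\|}{\gamma(T)}\|r\|$, and extending by $Sy=S_{0}(Qy)$ gives $\|Sy\|\le\frac{\|I-P\|\,\|Q\|}{\gamma(T)}\|y\|$, so $S$ is bounded. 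Conversely, if $S$ is bounded, then being also closed it has closed domain $D(S)$; but $D(S)=R(T)+\overline{R(T)}^{c}$ is dense in $Y$ (its $\overline{R(T)}$-component $R(T)$ being dense in $\overline{R(T)}$ in the direct-sum topology), so $D(S)=Y$, and the uniqueness of the decomposition then forces $R(T)=\overline{R(T)}$.

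Finally, in the closed-range case the construction identifies the remaining data: $Sy=0$ iff $Qy=0$ iff $y\in\overline{R(T)}^{c}=R(T)^{c}$, so $N(S)=R(T)^{c}$, while $R(S)=R(S_{0})=D(T)\cap N(T)^{c}$; together with $R(S)\subseteq D(T)$ and the continuity of $ST=I-P$ this exhibits $S$ as a bounded generalized inverse in the sense of Definition \ref{Def1.3}. I expect the main obstacle to be the boundedness equivalence in $(b)$: the forward implication rests on the standard characterization that $R(T)$ is closed if and only if $\gamma(T)>0$ for $T\in C(X,Y)$, and the converse requires both the closed-graph argument that a bounded closed operator has closed domain and the observation that the algebraic sum $R(T)+\overline{R(T)}^{c}$ is dense yet closed only when $R(T)$ is already closed.
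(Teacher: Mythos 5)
The paper does not prove this lemma at all --- it is quoted verbatim from Nashed's book \cite{17} as a known result --- so there is no in-paper argument to compare against. Your construction is correct and is the standard one: build $S_0=(T|_{D(T)\cap N(T)^c})^{-1}$, set $S=S_0Q$ on $R(T)+\overline{R(T)}^c$, verify $(1)$--$(4)$ by direct computation, get closedness of $S$ from closedness of $T$ together with continuity of $P$ and $Q$, and get uniqueness from $N(T)\cap N(T)^c=\{0\}$. All of these steps check out, as do the identifications $N(S)=R(T)^c$ and $R(S)=D(T)\cap N(T)^c$. The only place where you take a slightly roundabout route is the forward implication of $(b)$: once you know $R(T)$ closed forces $D(S)=Y$, the boundedness of the closed operator $S$ follows immediately from the Closed Graph Theorem (the same device the paper uses in the proof of Theorem \ref{The2.1}, ``$(4)\Rightarrow(2)$''), without invoking Kato's characterization that $R(T)$ is closed iff $\gamma(T)>0$; your $\gamma(T)$ argument is nevertheless valid and has the side benefit of producing the explicit bound $\|S\|\le\|I-P\|\,\|Q\|/\gamma(T)$.
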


\begin{definition}\label{Def1.4}\cite{17} Let $X$,$Y$ be Hilbert spaces and $T\in C(X,
Y)$. If the topological decompositions in Lemma \ref{Lem1.1} are orthogonal,
i.e., $$X=N(T)\dot{+} N(T)^{\perp}\quad and \quad Y=\overline{R(T)} \dot{+} {R(T)}^{\perp},$$ where $\dot{+}$ denotes the orthogonal direct sum, then the
corresponding generalized inverse is usually called the
Moore--Penrose inverse of $T$. In this case, the operators $P$ and
$Q$ in Lemma \ref{Lem1.1} are orthogonal projectors. The Moore--Penrose inverse of $T$ is always denoted by
$T^\dagger$.
\end{definition}
\begin{remark} The operator $T\in C(X,Y)$
has a generalized inverse  $T^+\in B(Y,X)$ if and only if $$X=N(T)\oplus N(T)^{c}\quad and
\quad Y=R(T)\oplus R(T)^{c}.$$
In this case,
it follows from the closed graph theorem that the operator $TT^+$ is
a projector from $Y$ onto $R(T)$ such that $N(TT^+)=N(T^+)$ and
$R(TT^+)=R(T).$ Meanwhile, by the condition $(3)$ in Definition \ref{Def1.3},
 $T^+T$ can be extended uniquely to a projector from $X$ onto
$\overline{R(T^+)}$ with the null space $N(T)$ and the range
$\overline{R(T^+)}.$
\end{remark}
\par  It is well known that the perturbation analysis of
Moore--Penrose inverses and generalized inverses in Hilbert and
Banach spaces are very important in practical applications of
operator theory and has been widely studied; cf. \cite{1, 2, 4, 6,
7, 8, 17, 21, 22}. Recently, the perturbation of generalized
inverses for linear operators in Hilbert spaces or Banach spaces has
been studied in \cite{2, 4, 6, 7, 8, 21, 22}. To achieve our
results, we need the concept of $T-$boundedness as follows.
\begin{definition}\label{Def1.5} \cite{11} Let $T$ and $P$ be linear
operators with the same domain space such that $D(T)\subseteq D(P)$
and $$\|Px\|\leq a\|x\|+b\|Tx\| \quad\quad (x\in D(T)),$$
where $a,b$ are nonnegative constants. Then we say $P$ is relatively
bounded with respect to $T$ or simply $T$-bounded and the greatest lower
bound of all possible constants $b$ will be called the relative
bound of $P$ with respect to $T$ or simply the $T$-bound.
\end{definition}
\begin{theorem}\label{The1.2} \cite{8} Let $X$, $Y$ be Banach spaces and let $T\in C(X,Y)$ with a bounded
 generalized inverse $T^+\in B(Y,X)$. Let $\delta T\in
L(X,Y)$ be $T-$bounded with constants $a,b$. If
 $a\|T^+\|+b\|TT^+\|<1$,
 then the following statements are
equivalent: \par\noindent $(1)$\ $B:=T^+(I+\delta TT^+)^{-1}:\ Y\to X$
 is a bounded generalized inverse of $\overline{T}:=T+\delta T;$
 \par\noindent $(2)$\ $(I+\delta T T^+)^{-1}R(\overline{T})= R(T);$
 \par\noindent
$(3)$\ $(I+\delta TT^+)^{-1}\overline{T}$ maps $N(T)$ into $R(T);$
\par\noindent $(4)$  $R(\overline{T})\cap N(T^+)=\{0\}.$\par
 Moreover, if one of the conditions above is true, then $\overline{T}$ is a closed operator and its range $R(\overline{T})$ is closed.
\end{theorem}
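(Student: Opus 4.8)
The plan is to reduce everything to the analysis of the single bounded idempotent $\overline T B$. First I would record the basic consequences of $T$-boundedness: for $y\in Y$ one has $\|\delta T T^+y\|\le a\|T^+y\|+b\|TT^+y\|\le(a\|T^+\|+b\|TT^+\|)\|y\|$, so $\delta T T^+\in B(Y)$ has norm $<1$; hence $I+\delta T T^+$ is invertible via the Neumann series and $F:=(I+\delta T T^+)^{-1}\in B(Y)$. Then $B=T^+F$ is bounded, and $R(B)\subseteq R(T^+)\subseteq D(T)=D(\overline T)$ (since $D(T)\subseteq D(\delta T)$ forces $D(\overline T)=D(T)$). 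Two facts will drive the argument: $(\mathrm{i})$ because $T^+z=0$ for $z\in N(T^+)$, the operator $\delta T T^+$ annihilates $N(T^+)$, so $F$ fixes $N(T^+)$ pointwise; and $(\mathrm{ii})$ writing $P=I-T^+T$ for the bounded projection onto $N(T)$ from Lemma \ref{Lem1.1}, a direct check gives the identity $\overline T x=(I+\delta T T^+)Tx+\delta T Px$ on $D(T)$, equivalently $F\overline T x=Tx+F\delta T Px$.

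Next I would dispatch the two generalized-inverse axioms that hold unconditionally. Using $\overline T T^+=TT^++\delta T T^+$ and fact $(\mathrm{i})$, the computation $B\overline T B=T^+F(TT^++\delta T T^+)F$ collapses (via $T^+F(I-TT^+)=T^+(I-TT^+)=0$) to $T^+F=B$, so $B\overline T B=B$ always. For continuity, the identity in $(\mathrm{ii})$ gives $B\overline T x=(I-P)x+T^+F\delta T Px$; since $Px\in N(T)$ we have $TPx=0$, so $T$-boundedness yields $\|\delta T Px\|\le a\|Px\|$ and hence $B\overline T$ is bounded. Thus only the axiom $\overline T B\overline T=\overline T$ remains in question.

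I would then study $P':=\overline T B=(TT^++\delta T T^+)F$. Since $B\overline T B=B$, $P'$ is idempotent; the relation $I-P'=(I-TT^+)F$ together with bijectivity of $F$ gives $N(P')=R(I-TT^+)=N(T^+)$, while visibly $R(P')\subseteq R(\overline T)$. As $P'$ is a bounded projection with range inside $R(\overline T)$, the axiom $\overline T B\overline T=\overline T$ is equivalent to $R(P')=R(\overline T)$, and decomposing $w\in R(\overline T)$ as $P'w+(I-P')w$ with $(I-P')w\in R(\overline T)\cap N(T^+)$ shows this holds iff $R(\overline T)\cap N(T^+)=\{0\}$; this is $(1)\Leftrightarrow(4)$. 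For the other pair, $(\mathrm{ii})$ yields $F\,R(\overline T)=R(T)+F\delta T(N(T))$, so $(2)\Leftrightarrow(3)$ (both assert $F\delta T(N(T))\subseteq R(T)$). To link the pairs I prove $(4)\Leftrightarrow(3)$: assuming $(4)$, for $v\in N(T)$ one has $\overline T v=\delta T v\in R(\overline T)=R(P')$, so $(TT^++\delta T T^+)(F\delta T v)=\delta T v$; applying $F$ and using $F(TT^++\delta T T^+)=I-F(I-TT^+)$ forces $(I-TT^+)F\delta T v=0$, i.e. $F\delta T v\in R(T)$, which is $(3)$; conversely $(3)\Rightarrow(4)$ because $F$ fixes $N(T^+)$ and $F\,R(\overline T)\subseteq R(T)$, so any $w\in R(\overline T)\cap N(T^+)$ satisfies $w=Fw\in R(T)\cap N(T^+)=\{0\}$.

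For the moreover clause, $b\|TT^+\|<1$ together with $\|TT^+\|\ge1$ forces the $T$-bound of $\delta T$ to be $<1$, so the standard stability theorem for closed operators under relatively bounded perturbations (see \cite{11}) shows $\overline T=T+\delta T$ is closed; and under $(1)$, $R(\overline T)=R(P')$ is the range of the bounded idempotent $P'$, hence closed. The step I expect to be the main obstacle is the careful bookkeeping with the generally non-orthogonal projections $TT^+$ and $I-T^+T$ and the inverse $F$ — specifically, verifying that $B\overline T B=B$ and the continuity of $B\overline T$ require no extra hypothesis, which is exactly where fact $(\mathrm{i})$ and the estimate $\|\delta T Px\|\le a\|Px\|$ enter.
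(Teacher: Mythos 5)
This theorem is quoted in the paper from reference \cite{8} and no proof of it is given in the text, so there is no internal argument to compare against; I have therefore checked your proposal on its own terms, and it is correct. Your reduction is clean: the two axioms $B\overline{T}B=B$ and continuity of $B\overline{T}$ indeed hold unconditionally, the first because $R(I-TT^+)=N(TT^+)=N(T^+)$ is fixed pointwise by $F=(I+\delta TT^+)^{-1}$ (so that $T^+F(I-TT^+)=T^+(I-TT^+)=0$), and the second via $\|\delta TPx\|\le a\|Px\|$ since $TPx=0$; this isolates $\overline{T}B\overline{T}=\overline{T}$ as the only conditional axiom, and your analysis of the bounded idempotent $P'=\overline{T}B$ with $N(P')=N(T^+)$ and $R(P')\subseteq R(\overline{T})$ gives $(1)\Leftrightarrow(4)$ and the closedness of $R(\overline{T})=R(P')$ at once. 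The identity $F\overline{T}x=Tx+F\delta TPx$ together with the observation that $(Tx,Px)$ ranges over all of $R(T)\times N(T)$ correctly yields $FR(\overline{T})=R(T)+F\delta T(N(T))$ and hence $(2)\Leftrightarrow(3)$, and your bridge $(3)\Leftrightarrow(4)$ is sound. Two small points you should make explicit in a final write-up: the inequality $\|TT^+\|\ge 1$ used to get the $T$-bound below $1$ holds because $TT^+$ is a nonzero idempotent, which requires the (trivial) case $T=0$ to be treated separately; and the phrase ``$B$ is a generalized inverse of $\overline{T}$'' presupposes $\overline{T}\in C(X,Y)$, so the appeal to Kato's stability theorem for closedness should be stated before, not after, the equivalences. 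Neither affects the validity of the argument.
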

 \par In this paper, we use the expression and the stability characterization of the Moore--Penrose inverse to investigate the condition for
the perturbed operators to have the Hyers--Ulam stability and the
condition in order that the Hyers--Ulam stability constant be
continuous. In Section 2, we first establish a relationship between
the Hyers--Ulam stability and the Moore--Penrose inverse, that is, a
closed operator has the Hyers--Ulam stability if and only if it has
the bounded Moore--Penrose inverse. Meanwhile, the stability
constant is determined in terms of the Moore--Penrose inverse.
Utilizing this result, we give some sufficient conditions for the
perturbed operators having the Hyers--Ulam stability and give an
explicit expression of the Hyers--Ulam stability constant. In the
case of bounded linear operators, some sufficient and necessary
conditions for the Hyers--Ulam stability constants to be continuous
are also provided in Section 3. In the end, as an application, we
give a characterization for the Hyers--Ulam stability constants of
the semi-Fredholm operators to be continuous.

\section{The case of closed linear operators}

The following lemma is proved by a straightforward verification of the conditions of the definition of Moore--Penrose inverse.
\begin{lemma}\label{Lem2.1} \cite{8}
Let $X, Y$ be Hilbert spaces and $T\in C(X,Y)$ with a generalized inverse $T^+\in B(Y,X)$, then
$T$ has the bounded Moore--Penrose inverse $T^\dagger$ and
$$T^\dagger=[I-P_{N(T)}^\perp]T^+P_{R(T)}^\perp.$$
\end{lemma}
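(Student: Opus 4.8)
The plan is to show that the bounded operator $S:=[I-P_{N(T)}^\perp]T^+P_{R(T)}^\perp$ coincides with the generalized inverse of $T$ attached to the \emph{orthogonal} decompositions, which by Definition \ref{Def1.4} is exactly $T^\dagger$. Here $P_{N(T)}^\perp$ and $P_{R(T)}^\perp$ denote the orthogonal projectors onto $N(T)$ and $R(T)$, respectively. First I would record that, since $T$ admits the bounded generalized inverse $T^+\in B(Y,X)$, the range $R(T)$ is closed (Lemma \ref{Lem1.1}(b)), so that $R(T)=\overline{R(T)}$, the orthogonal splittings $X=N(T)\dot{+}N(T)^\perp$ and $Y=R(T)\dot{+}R(T)^\perp$ hold, and the orthogonal projectors $P_{N(T)}^\perp$, $P_{R(T)}^\perp$ are well defined and bounded; hence $S\in B(Y,X)$ is bounded and everywhere defined. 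The goal is then to verify the four identities of Lemma \ref{Lem1.1}(a) with $P=P_{N(T)}^\perp$ and $Q=P_{R(T)}^\perp$ and to appeal to the uniqueness asserted there.

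The whole computation rests on two elementary consequences of Definition \ref{Def1.3}. For $x\in D(T)$, condition $(1)$ gives $T(T^+Tx)=Tx$, so $T^+Tx-x\in N(T)$ and therefore $[I-P_{N(T)}^\perp](T^+Tx)=[I-P_{N(T)}^\perp]x$. For $w\in R(T)$, writing $w=Tx_0$ with $x_0\in D(T)$ and applying condition $(1)$ again yields $TT^+w=TT^+Tx_0=Tx_0=w$. With these in hand, condition $(3)$ follows at once: for $x\in D(T)$ one has $P_{R(T)}^\perp Tx=Tx$, so
$$STx=[I-P_{N(T)}^\perp]T^+Tx=[I-P_{N(T)}^\perp]x=(I-P)x.$$
Condition $(1)$ is then immediate, since $P_{N(T)}^\perp x\in N(T)\subseteq D(T)$ forces $[I-P_{N(T)}^\perp]x\in D(T)$ with $TSTx=T[I-P_{N(T)}^\perp]x=Tx$.

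For the remaining two identities I would use the second relation. For $y\in Y$, set $v=T^+P_{R(T)}^\perp y\in R(T^+)\subseteq D(T)$; since $P_{N(T)}^\perp v\in N(T)$, one gets $TSy=T[I-P_{N(T)}^\perp]v=Tv=TT^+P_{R(T)}^\perp y=P_{R(T)}^\perp y=Qy$, which is condition $(4)$. Condition $(2)$ then drops out by combining the two: $STSy=S(P_{R(T)}^\perp y)=[I-P_{N(T)}^\perp]T^+P_{R(T)}^\perp P_{R(T)}^\perp y=Sy$, using idempotency of $P_{R(T)}^\perp$. Having checked $(1)$--$(4)$, the uniqueness in Lemma \ref{Lem1.1}(a) identifies $S$ with the generalized inverse determined by the orthogonal projectors $P,Q$, and Definition \ref{Def1.4} names this operator $T^\dagger$; its boundedness is already built into the formula. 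The only point demanding care is the domain bookkeeping---checking at each step that the vectors fed to $T$ genuinely lie in $D(T)$---together with the correct reading of the superscript $\perp$ as ``orthogonal projector onto''; there is no genuine analytic obstacle beyond these.
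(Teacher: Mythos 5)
Your verification is correct and matches the paper's approach: the paper gives no details, stating only that the lemma "is proved by a straightforward verification of the conditions of the definition of Moore--Penrose inverse," which is precisely what you carry out by checking the four identities of Lemma \ref{Lem1.1}(a) for the orthogonal projectors and invoking the uniqueness there together with Definition \ref{Def1.4}. Your domain bookkeeping (in particular $R(T^+)\subseteq D(T)$ and $N(T)\subseteq D(T)$) and the use of the closedness of $R(T)$ to get $D(S)=Y$ are exactly the points that need care, and you handle them correctly.
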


 \begin{theorem}\label{The2.1} Let $X, Y$ be Hilbert spaces and $T\in C(X,Y)$. Then the following statements are equivalent:
\par\noindent\ $(1)$\ $T$ has the Hyers--Ulam stability;
 \par\noindent\ $(2)$\ $T$ has the bounded Moore--Penrose inverse $T^\dagger$;
 \par\noindent\ $(3)$\ $T$ has a bounded generalized inverse $T^+$;
 \par\noindent\ $(4)$\ $T$ has closed range.
\par Moreover, if one of the conditions above is true, then $D(T^\dagger)=Y$, $N(T^\dagger)=R(T)^{\perp}$,
$R(T^\dagger)=D(T)\cap N(T)^{\perp}$ and
$$K_T=\|T^\dagger\|=\gamma(T)^{-1}.$$
\end{theorem}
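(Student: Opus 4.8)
The plan is to prove all four statements equivalent through the short cycle $(4)\Rightarrow(3)\Rightarrow(2)\Rightarrow(4)$ anchored to $(1)\Leftrightarrow(4)$, and then to identify the constants separately. The equivalence $(1)\Leftrightarrow(4)$ is precisely Theorem~\ref{The1.1}, which also records $K_T=\gamma(T)^{-1}$, so the substantive task is to fold $(2)$ and $(3)$ into the chain. For $(4)\Rightarrow(3)$ I would use that $X$ and $Y$ are Hilbert spaces: since $T$ is closed, $N(T)$ is a closed subspace, whence $X=N(T)\dot{+}N(T)^{\perp}$; since $R(T)$ is assumed closed, $Y=R(T)\dot{+}R(T)^{\perp}$. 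These orthogonal decompositions are in particular the topological decompositions required in Lemma~\ref{Lem1.1}, so part $(a)$ furnishes the operator $S$, and part $(b)$, using closedness of $R(T)$ once more, guarantees that $S$ is a bounded generalized inverse $T^{+}\in B(Y,X)$.

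The step $(3)\Rightarrow(2)$ is then immediate from Lemma~\ref{Lem2.1}, which manufactures the bounded Moore--Penrose inverse $T^{\dagger}$ out of any bounded generalized inverse. To close the cycle I would prove $(2)\Rightarrow(4)$: by Definition~\ref{Def1.4} the operator $T^{\dagger}$ is itself a bounded generalized inverse of $T$, so the Remark following Definition~\ref{Def1.4} shows $TT^{\dagger}$ is a bounded projector of $Y$ onto $R(T)$; since the range of a bounded idempotent equals the kernel of $I-TT^{\dagger}$ and is therefore closed, $R(T)$ is closed. This establishes the four equivalences. The supplementary identities follow from Lemma~\ref{Lem1.1}$(b)$ specialized to the orthogonal case $N(T)^{c}=N(T)^{\perp}$ and $R(T)^{c}=R(T)^{\perp}$, giving $D(T^{\dagger})=Y$, $N(T^{\dagger})=R(T)^{\perp}$ and $R(T^{\dagger})=D(T)\cap N(T)^{\perp}$; the same data can be read off directly from the formula $T^{\dagger}=[I-P_{N(T)}^{\perp}]T^{+}P_{R(T)}^{\perp}$ of Lemma~\ref{Lem2.1}.

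The step I expect to be the main obstacle is the norm identity $\|T^{\dagger}\|=\gamma(T)^{-1}$, which I would settle by two matching inequalities built on the Hilbert-space description $\gamma(T)=\inf\{\|Tx\|:x\in D(T)\cap N(T)^{\perp},\ \|x\|=1\}$. Since $N(T^{\dagger})=R(T)^{\perp}$ and $Y=R(T)\dot{+}R(T)^{\perp}$, the supremum defining $\|T^{\dagger}\|$ reduces to $y\in R(T)$; for such $y$ put $x:=T^{\dagger}y\in D(T)\cap N(T)^{\perp}$, so that $Tx=y$, and normalizing $x$ gives $\|y\|=\|Tx\|\ge\gamma(T)\|x\|=\gamma(T)\|T^{\dagger}y\|$, hence $\|T^{\dagger}\|\le\gamma(T)^{-1}$. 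For the reverse inequality I would run this backwards: every $x\in D(T)\cap N(T)^{\perp}$ satisfies $T^{\dagger}Tx=x$, so $\|x\|=\|T^{\dagger}Tx\|\le\|T^{\dagger}\|\,\|Tx\|$, and taking the infimum over such $x$ with $\|x\|=1$ yields $\gamma(T)\ge\|T^{\dagger}\|^{-1}$, that is $\|T^{\dagger}\|\ge\gamma(T)^{-1}$. Combining the two with $K_T=\gamma(T)^{-1}$ from Theorem~\ref{The1.1} delivers $K_T=\|T^{\dagger}\|=\gamma(T)^{-1}$, completing the argument.
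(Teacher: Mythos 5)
Your proof is correct, but it takes a genuinely different route from the paper's. The paper does not invoke Theorem~\ref{The1.1} at all: it constructs $T^\dagger$ explicitly as $(T|_{N(T)^\perp})^{-1}Q$ on $R(T)+R(T)^\perp$ and proves $(1)\Leftrightarrow(2)$ directly --- the estimate $\|x-(I-T^\dagger T)x\|=\|T^\dagger Tx\|\le\|T^\dagger\|\,\|Tx\\|$ gives $K_T\le\|T^\dagger\|$, while applying the stability condition to the points $x=T^\dagger y$, which are orthogonal to $N(T)$, gives $K\ge\|T^\dagger\|$ for \emph{every} stability constant $K$, hence $K_T=\|T^\dagger\|$ --- and it obtains $(2)\Leftrightarrow(4)$ from the observation that a densely defined closed operator that is bounded must have domain all of $Y$, i.e.\ $Y=R(T)+R(T)^\perp$. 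Your cycle $(4)\Rightarrow(3)\Rightarrow(2)\Rightarrow(4)$ and your two matching inequalities for $\|T^\dagger\|=\gamma(T)^{-1}$ are all sound (in particular $T^\dagger Tx=x$ on $D(T)\cap N(T)^\perp$ and the reduction of the supremum defining $\|T^\dagger\|$ to $y\in R(T)$ are fine). The trade-off is this: by anchoring to Theorem~\ref{The1.1} you import both the equivalence of stability with closed range and the identity $K_T=\gamma(T)^{-1}$ from Hirasawa--Miura, whereas the paper's argument is self-contained and is explicitly designed to \emph{re-derive} that result --- Corollary~\ref{Cor2.1}, which the authors deduce from Theorem~\ref{The2.1}, is exactly Theorem~\ref{The1.1}, so under your proof that deduction would become circular. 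What you gain is modularity and brevity: the only new content you must verify is the purely operator-theoretic equivalence of $(2)$, $(3)$, $(4)$ together with the norm identity, and your direct computation of $\|T^\dagger\|$ against $\gamma(T)$ is arguably cleaner than the paper's passage through $d(x,N(T))$ and the quotients $\|y\|/\|T^\dagger y\|$.
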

\begin{proof} Note that $T|_{{N(T)}^\perp}: D(T)\cap{N(T)}^\perp\to R(T)$ is invertible and $T^\dagger$ is defined by $$T^\dagger y=(T|_{{N(T)}^\perp})^{-1}Q y\quad\quad (y\in R(T)+{R(T)}^\bot),$$
where $Q$ is the orthogonal
projector of $Y$ onto $\overline{R(T)}$ along ${R(T)}^\bot$. Then $T^\dagger$ is a densely defined closed operator with $D(T^\dagger)=R(T)+{R(T)}^\bot$ and $R(T^\dagger)=D(T)\cap N(T)^{\perp}$. \par $(2)\Rightarrow (1).$ If $T$ has the bounded Moore--Penrose inverse $T^\dagger$, then for all $x\in D(T)$, $(I-T^\dagger T)x\in N(T)$ and
$$\|x-(I-T^\dagger T)x\|=\|T^\dagger Tx\|\leq\|T^\dagger\| \|Tx\|.$$
This means that $T$ has the Hyers--Ulam stability and
$K_T\leq\|T^\dagger\|.$ Assume that $K$ is a Hyers--Ulam stability
constant for $T$, i.e., for all $x\in D(T)$, there exists $x_0\in
N(T)$ such that $\|x-x_0\|\leq K\|Tx\|$. Then for all $y\in Y$,
$T^\dagger y\in D(T)\cap{N(T)}^\perp$ and there exists $x_1\in N(T)$
such that $\|T^\dag y-x_1\|\leq K\|TT^\dag y\|$. Since $T^\dagger y
\bot x_1$, we get
\begin{eqnarray}\label{(2.1)}
\|T^\dagger y\|\leq \|T^\dag y-x_1\|\leq K\|TT^\dag y\|\leq K\|y\|.
\end{eqnarray}
Hence $K\geq\|T^\dag\|$ and thus $K_T\geq\|T^\dag\|$. Therefore
$K_T=\|T^\dag\|$. \par $(1)\Rightarrow (2).$ If $T$ has the
Hyers--Ulam stability, then by (\ref{(2.1)}), we know that the
Moore--Penrose inverse $T^\dagger$ is bounded.
\par $(2)\Leftrightarrow (3).$ From Lemma \ref{Lem2.1}, we can see $(3)\Rightarrow (2)$.  Since the Moore--Penrose inverse $T^\dagger$ is also a generalized inverse, we can get $(2)\Rightarrow (3)$.
\par $(4)\Rightarrow (2).$
If $R(T)$ is closed, then $D(T^\dagger)=Y$ and by the Closed Graph Theorem, $T^\dagger$ is bounded. \par $(2)\Rightarrow (4).$
If $T^\dagger$ is bounded, since $T^\dagger$ is a densely defined closed operator, we get $D(T^\dagger)=Y$, i.e, $Y=R(T)+{R(T)}^\bot.$ This implies that $R(T)$ is closed. In the following, we shall show $\gamma(T)=\|T^\dagger\|^{-1}$. In fact, for all $x\in D(T)$, we have
$(I-T^\dagger T)x\in N(T)$ and
$$d(x, N(T))\leq\|x-(I-T^\dagger T)x\|=\|T^\dagger Tx\|\leq\|T^\dagger\| \|Tx\|.$$ Then
$\gamma(T)\geq\|T^\dagger\|^{-1}$. Since $\gamma(T)\leq\|Tx\|$ for all $x\in D(T)\cap{N(T)}^\perp$ with $\|x\|=1$, we get
for all $y\in Y$ satisfying $\|T^\dagger y\|=1$, $$\gamma(T)\leq\|TT^\dagger y\|=\|Qy\|\leq\|y\|.$$ Hence for all $y\in Y$ with $T^\dagger y\neq 0$, $\gamma(T)\leq\frac{\|y\|}{\|T^\dagger y\|}$. Thus
$$\gamma(T)\leq\inf\left\{\frac{\|y\|}{\|T^\dagger y\|}:\ y\in Y, T^\dagger y\neq 0\right\}=\left(\sup\left\{\frac{\|T^\dagger y\|}{\|y\|}:\ y\in Y\right\}\right)^{-1}=\|T^\dagger\|^{-1}$$
Therefore $\gamma(T)=\|T^\dagger\|^{-1}$.
\end{proof}
\par\vspace{0.1in} Applying Theorem \ref{The2.1}, we get \cite[Theorem 3.1]{5}.
\begin{corollary}\label{Cor2.1} \cite{5} Let $X, Y$ be Hilbert spaces and $T\in C(X,Y)$. Then
$T$ has the Hyers--Ulam stability if and only if $T$ has closed range.
 Moreover, in this case, $K_T=\gamma(T)^{-1}$.
\end{corollary}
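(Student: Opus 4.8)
The plan is to read this off directly from Theorem \ref{The2.1}, since the corollary is simply the specialization of that omnibus equivalence to the two conditions appearing in the statement of Hirasawa and Miura. No fresh construction is needed; the entire content has already been established, so my proof will consist of extracting the relevant members of the equivalence and of the accompanying constant formula.

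First, for the equivalence I would invoke the chain $(1)\Leftrightarrow(4)$ of Theorem \ref{The2.1}: statement $(1)$ there is precisely ``$T$ has the Hyers--Ulam stability'' and statement $(4)$ is ``$T$ has closed range.'' Hence the two are equivalent, which is exactly the first assertion of the corollary. Internally this loop was already closed in the proof of Theorem \ref{The2.1} via $(1)\Rightarrow(2)\Rightarrow(4)$ and $(4)\Rightarrow(2)\Rightarrow(1)$, so nothing further is required.

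Second, for the constant I would appeal to the ``Moreover'' clause of Theorem \ref{The2.1}, which guarantees that whenever any of $(1)$--$(4)$ holds one has
$$K_T=\|T^\dagger\|=\gamma(T)^{-1}.$$
Reading off the outer two members of this identity yields $K_T=\gamma(T)^{-1}$, completing the proof.

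There is essentially no obstacle here: the real work resides in Theorem \ref{The2.1}, and the corollary is a matter of bookkeeping. If one insisted on a self-contained argument, the only nontrivial ingredient to reproduce would be the identity $\gamma(T)=\|T^\dagger\|^{-1}$, obtained by combining the lower bound $d(x,N(T))\le\|T^\dagger Tx\|\le\|T^\dagger\|\,\|Tx\|$ (which gives $\gamma(T)\ge\|T^\dagger\|^{-1}$) with the reverse estimate coming from $\|TT^\dagger y\|=\|Qy\|\le\|y\|$ for $y$ with $\|T^\dagger y\|=1$; but under the hypotheses of Theorem \ref{The2.1} this is precisely the computation already carried out in the step $(2)\Rightarrow(4)$, so I would simply cite it rather than redo it.
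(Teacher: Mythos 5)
Your proposal is correct and matches the paper exactly: the paper derives this corollary by simply applying Theorem \ref{The2.1}, reading the equivalence $(1)\Leftrightarrow(4)$ together with the identity $K_T=\|T^\dagger\|=\gamma(T)^{-1}$ from the ``Moreover'' clause, just as you do. No further argument is needed.
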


In the following, we shall use the expressions and stability
characterizations of the Moore--Penrose inverse to investigate the
Hyers--Ulam stability of closed operators. We need the following
lemma, which can be proved by using the fact that
$P_MP_M^*[I-(P_M-P_M^*)^2]=[I-(P_M-P_M^*)^2]P_MP_M^*$.

\begin{lemma}\label{Lem2.2} \cite{8}
Let $X$ be a Hilbert space and $M$ be a closed linear subspace of
$X$. Let $P_M: X\to M$ be a (not necessarily selfadjoint) projector from $X$ onto $M$,
then the orthogonal projector $P_M^\perp$ from $X$ onto $M$ can be
expressed by
$$P_M^\perp=P_MP_M^*[I-(P_M-P_M^*)^2]^{-1}=[I-(P_M-P_M^*)^2]^{-1}P_MP_M^*.$$
\end{lemma}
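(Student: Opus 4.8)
The plan is to set $P := P_M$, recall that $P$ is idempotent (so $P^2=P$ and hence $(P^*)^2=P^*$) with $R(P)=M$, and then verify directly that the operator
\[
E := P P^* \bigl[I-(P-P^*)^2\bigr]^{-1}
\]
is well defined, self-adjoint, idempotent, and has range $M$; since an orthogonal projector is uniquely determined by its range, this forces $E = P_M^\perp$. First I would establish that $A := I-(P-P^*)^2$ is invertible. Writing $S := P-P^*$, one checks $S^* = -S$, so that $S^*S = -S^2$ is positive semidefinite; hence $A = I + S^*S \geq I$, which shows $A$ is self-adjoint, boundedly invertible, and commutes with every operator that commutes with $S^2$.

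The main work — and the step I expect to be the principal obstacle — is the purely algebraic verification of two identities, both obtained by repeatedly using $P^2=P$ and $(P^*)^2=P^*$ to collapse products. The first is the commutativity stated in the lemma's hint, $P P^* A = A P P^*$, equivalently the assertion that $P P^*$ commutes with $S^2 = P + P^* - PP^* - P^*P$; expanding $PP^*S^2$ and $S^2 PP^*$ and simplifying the quadruple products (for instance $PP^*P^*P = PP^*P$ and $P^*PPP^* = P^*PP^*$) shows that both sides equal $PP^* - (PP^*)^2$. Once this holds, multiplying on both sides by $A^{-1}$ yields $PP^*A^{-1} = A^{-1}PP^*$, which both legitimizes the two factorizations in the statement and makes $E = A^{-1}PP^*$ a product of two commuting self-adjoint operators, hence self-adjoint. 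The second identity, extracted from the same computation, is $S^2 PP^* = PP^* - (PP^*)^2$, i.e. $A\,PP^* = (PP^*)^2$.

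With these in hand the idempotency follows cleanly: using commutativity, $E^2 = A^{-1}PP^* A^{-1} PP^* = A^{-2}(PP^*)^2 = A^{-2}\bigl(A\,PP^*\bigr) = A^{-1}PP^* = E$. Thus $E$ is an orthogonal projector, and it remains to identify its range. Using the factorization $E = PP^*A^{-1}$ and the bijectivity of $A^{-1}$, I obtain $R(E) = R(PP^*)$; since $R(E)$ is closed (being the range of an orthogonal projector) while $\overline{R(PP^*)} = \overline{R(P)} = M$ by the standard fact $\overline{R(TT^*)} = \overline{R(T)}$ together with $R(P)=M$ and the closedness of $M$, a closed subspace dense in $M$ must equal $M$, so $R(E) = M$. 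An orthogonal projector with range $M$ is precisely $P_M^\perp$, which completes the proof.
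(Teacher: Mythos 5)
Your proof is correct and follows essentially the approach the paper indicates: it hinges on exactly the commutation identity $P_MP_M^*[I-(P_M-P_M^*)^2]=[I-(P_M-P_M^*)^2]P_MP_M^*$ that the paper cites as the key fact (the paper itself defers the details to reference [8]). Your verification of the identities $A\,PP^*=PP^*A=(PP^*)^2$ and the subsequent self-adjointness, idempotency, and range computations are all sound.
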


Utilizing Lemma \ref{Lem2.1} with Lemma \ref{Lem2.2}, we can get the following lemma.
\begin{lemma}\label{Lem2.3}
Let $T\in C(X,Y)$ with a bounded generalized inverse $T^+\in B(Y,X)$, then
$T$ has the bounded Moore--Penrose inverse $T^\dagger$ and
$$T^\dagger=\{I-[(T^+T)^{**}-(T^+T)^*]^2\}^{-1}(T^+T)^*T^+(TT^+)^*\{I-[TT^+-(TT^+)^*]^2\}^{-1}$$
\end{lemma}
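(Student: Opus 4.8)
The plan is to start from the factorization of the Moore--Penrose inverse supplied by Lemma \ref{Lem2.1}, namely
$$T^\dagger=[I-P_{N(T)}^\perp]\,T^+\,P_{R(T)}^\perp,$$
and to rewrite each of the two \emph{orthogonal} projectors $P_{R(T)}^\perp$ and $I-P_{N(T)}^\perp$ by means of Lemma \ref{Lem2.2}, feeding in the natural \emph{oblique} projectors built from $T$ and $T^+$; the generalized-inverse identities will then collapse the result to the stated form. The boundedness of $T^\dagger$ is already contained in Lemma \ref{Lem2.1}, so only the formula is at issue. Since $T^+\in B(Y,X)$, the range $R(T)$ is closed (Theorem \ref{The2.1}) and $N(T)^\perp$ is closed automatically, so Lemma \ref{Lem2.2} applies to both subspaces.

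For the range side I would use that, by the Remark following Definition \ref{Def1.4}, $TT^+$ is a bounded projector of $Y$ onto $R(T)$. Applying Lemma \ref{Lem2.2} with $M=R(T)$ and $P_M=TT^+$ gives
$$P_{R(T)}^\perp=(TT^+)(TT^+)^*\{I-[TT^+-(TT^+)^*]^2\}^{-1}.$$
Multiplying on the left by $T^+$ and invoking $T^+TT^+=T^+$ (condition $(2)$ of Definition \ref{Def1.3}) to absorb the first factor $TT^+$, I obtain
$$T^+P_{R(T)}^\perp=T^+(TT^+)^*\{I-[TT^+-(TT^+)^*]^2\}^{-1},$$
which is exactly the right-hand half of the claimed identity.

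The null-space side is the main point, and the subtlety is recognizing which oblique projector reproduces the orthogonal projector onto $N(T)^\perp$. Again by the Remark, $T^+T$ extends uniquely to a bounded projector $(T^+T)^{**}$ of $X$ onto $\overline{R(T^+)}$ along $N(T)$; hence its adjoint $(T^+T)^*$ is the bounded projector of $X$ onto $N(T)^\perp$ (along $\overline{R(T^+)}^\perp$), since the adjoint of an oblique projector onto $M$ along $N$ is the oblique projector onto $N^\perp$ along $M^\perp$. Applying Lemma \ref{Lem2.2} with $M=N(T)^\perp$ and $P_M=(T^+T)^*$, and using $((T^+T)^*)^*=(T^+T)^{**}$ together with $(a-b)^2=(b-a)^2$, yields
$$I-P_{N(T)}^\perp=P_{N(T)^\perp}^\perp=\{I-[(T^+T)^{**}-(T^+T)^*]^2\}^{-1}(T^+T)^*(T^+T)^{**}.$$

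It then remains to assemble the pieces: I would multiply the last display on the right by the already-simplified $T^+(TT^+)^*\{I-[TT^+-(TT^+)^*]^2\}^{-1}$ and use $(T^+T)^{**}T^+=T^+$ (valid since $T^+y\in R(T^+)$ is fixed by the projector $(T^+T)^{**}$, equivalently $T^+TT^+=T^+$ once more) to eliminate the middle factor $(T^+T)^{**}$, arriving at
$$T^\dagger=\{I-[(T^+T)^{**}-(T^+T)^*]^2\}^{-1}(T^+T)^*T^+(TT^+)^*\{I-[TT^+-(TT^+)^*]^2\}^{-1}.$$
I expect the only genuine obstacle to be the bookkeeping of domains and the double adjoint: because $T$ may be unbounded, $T^+T$ is a priori defined and bounded only on $D(T)$, so one must pass to its bounded closure $(T^+T)^{**}$ before Lemma \ref{Lem2.2}, stated for bounded projectors on the whole space, can be applied legitimately. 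Verifying $(T^+T)^{**}T^+=T^+$ and that the ranges and null spaces of the oblique projectors $TT^+$ and $(T^+T)^*$ are as claimed is the part that demands care rather than ingenuity.
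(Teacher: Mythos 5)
Your proof is correct and follows essentially the same route as the paper's: both start from $T^\dagger=[I-P_{N(T)}^\perp]\,T^+\,P_{R(T)}^\perp$ (Lemma \ref{Lem2.1}), apply Lemma \ref{Lem2.2} to the projectors $TT^+$ and the bounded extension $(T^+T)^{**}$, and collapse the extra factors via $T^+TT^+=T^+$. The only (harmless) difference is that you get $I-P_{N(T)}^\perp$ by applying Lemma \ref{Lem2.2} directly to $M=N(T)^\perp$ with the adjoint projector $(T^+T)^*$, whereas the paper applies it to $M=N(T)$ with $P_{N(T)}=(I-T^+T)^{**}$ and then simplifies $I-P_{N(T)}^\perp$ by a short algebraic manipulation.
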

\begin{proof} Since $TT^+$ is a projector from $Y$ onto $R(T)$, it follows from Lemma \ref{Lem2.2} that
$$P_{R(T)}^\perp=TT^+(TT^+)^*\{I-[TT^+-(TT^+)^*]^2\}^{-1}.$$
Noting that $I-T^+T$ is a bounded projector from $D(T)$ onto $N(T)$ and $D(T)$ is dense in $X$,
one can verify that $(I-T^+T)^*$ is defined on the whole space $Y$. It follows from the Closed Graph Theorem that $(I-T^+T)^*$ is bounded.
Hence $P_{N(T)}=(I-T^+T)^{**}$, which is exactly the unique norm-preserving extension
 to whole space $X$ of $I-T^+T$. Thus  $ P_{N(T)}^*=(I-T^+T)^*$ and
\begin{eqnarray*}&&I-P_{N(T)}^\perp\\ &=&I-[I-(P_{N(T)}-P_{N(T)}^*)^2]^{-1}P_{N(T)}P_{N(T)}^*\\
&=&[I-(P_{N(T)}-P_{N(T)}^*)^2]^{-1}[I-(P_{N(T)}-P_{N(T)}^*)^2-P_{N(T)}P_{N(T)}^*]\\
&=&[I-(P_{N(T)}-P_{N(T)}^*)^2]^{-1}(I-P_{N(T)}^*)(I-P_{N(T)})\\
&=&\{I-[(I-T^+T)^{**}-(I-T^+T)^*]^2\}^{-1}[I-(I-T^+T)^*][I-P_{N(T)}]\\
&=&\{I-[(T^+T)^{**}-(T^+T)^*]^2\}^{-1}(T^+T)^*[I-P_{N(T)}].
\end{eqnarray*}
Therefore, by $P_{N(T)}|_{D(T)}=I-T^+T$ and $R(T^+)\subset D(T)$, we obtain
\begin{eqnarray*}T^\dagger&=&[I-P_{N(T)}^\perp]T^+P_{R(T)}^\perp\\
&=&\{I-[(T^+T)^{**}-(T^+T)^*]^2\}^{-1}(T^+T)^*[I-P_{N(T)}]\\&&T^+(TT^+)(TT^+)^*\{I-[TT^+-(TT^+)^*]^2\}^{-1}
\\
&=&\{I-[(T^+T)^{**}-(T^+T)^*]^2\}^{-1}(T^+T)^*T^+(TT^+)^*\{I-[TT^+-(TT^+)^*]^2\}^{-1}.
\end{eqnarray*}
\end{proof}

\begin{lemma}\label{Lem2.4}
Let $T\in C(X,Y)$ have a
generalized inverse $T^+\in B(Y,X)$ and $\delta T\in
L(X,Y)$ be $T-$bounded with constants $a,b$. If
 $a\|T^+\|+b\|TT^+\|<1$,
 then $\overline{T}=T+\delta T$ is closed and $$B=T^+(I+\delta
TT^+)^{-1}:\ Y\to X$$ satisfies
$B\overline{T}B=B$ on $Y$, $R(B)=R(T^+)$ and
 $N(B)=N(T^+)$.
\end{lemma}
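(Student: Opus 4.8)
The plan is to treat the lemma in three stages: first make sense of $B$ and record the closedness of $\overline T$, then dispose of the range and null-space identities (which are quick), and finally prove the relation $B\overline T B=B$, which is where the only genuine idea is needed.

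To see that $B$ is a well-defined bounded operator I would apply the $T$-boundedness inequality to $z=T^+y\in R(T^+)\subseteq D(T)$, obtaining $\|\delta T T^+y\|\le a\|T^+y\|+b\|TT^+y\|\le(a\|T^+\|+b\|TT^+\|)\|y\|$, so that $\|\delta T T^+\|\le a\|T^+\|+b\|TT^+\|<1$. Hence $I+\delta T T^+$ is invertible in $B(Y)$ via the Neumann series, and $B=T^+(I+\delta T T^+)^{-1}\in B(Y,X)$. For the closedness of $\overline T$ I would note that $TT^+$ is a nonzero idempotent (for $T\neq 0$), so $\|TT^+\|\ge 1$ and therefore $b\|TT^+\|<1$ forces the relative bound $b<1$; closedness of $\overline T=T+\delta T$ then follows from the standard fact that relatively bounded perturbations of relative bound less than $1$ preserve closedness, which one can check directly from the estimate $(1-b)\|Tu\|\le\|\overline T u\|+a\|u\|$ applied along a sequence $x_n\to x$ with $\overline T x_n$ convergent.

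The two easy identities come next. Writing $G=(I+\delta T T^+)^{-1}$, the map $G$ is a bijection of $Y$, so $R(B)=T^+G(Y)=T^+(Y)=R(T^+)$. For the kernel, if $T^+y=0$ then $\delta T T^+y=0$, whence $(I+\delta T T^+)y=y$, so $Gy=y$ and $By=T^+y=0$; thus $N(T^+)\subseteq N(B)$. Conversely, if $By=0$ then $w:=Gy\in N(T^+)$, so $\delta T T^+w=0$ and $y=(I+\delta T T^+)w=w$, giving $T^+y=T^+w=0$; hence $N(B)\subseteq N(T^+)$, and the two inclusions yield $N(B)=N(T^+)$.

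The crux is $B\overline T B=B$. The key algebraic step is the decomposition, valid on $D(T)$,
$$\overline T=(I+\delta T T^+)\,T+\delta T\,(I-T^+T),$$
which I would verify by expanding $\delta T x=\delta T T^+Tx+\delta T(I-T^+T)x$, all terms being legitimate since $T^+Tx\in R(T^+)\subseteq D(T)$. Applying $T^+G$ on the left and using $G(I+\delta T T^+)=I$ gives, for $x\in D(T)$,
$$B\overline T x=T^+Tx+T^+G\,\delta T\,(I-T^+T)x.$$
The main obstacle, really the one idea, is to observe that $T^+T$ is the identity on $R(T^+)$: from $T^+TT^+=T^+$ one gets $(I-T^+T)v=0$ for every $v\in R(T^+)=R(B)$. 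Substituting $x=By\in R(B)$ therefore annihilates the second term and leaves $B\overline T By=T^+T(By)=By$, i.e. $B\overline T B=B$ on $Y$. I expect the only delicate points to be the bookkeeping of domains (each composition must be defined, since $T$ and $\overline T$ are unbounded) and the appeal to the perturbation theorem for closedness; once the decomposition above is in hand the algebra is short.
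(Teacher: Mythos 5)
Your proof is correct, and it is worth noting where it diverges from the paper's. For the identity $B\overline{T}B=B$ the paper works with the single relation $\overline{T}T^{+}=(I+\delta T\,T^{+})TT^{+}$, inverts to get $(I+\delta T\,T^{+})^{-1}\overline{T}T^{+}=TT^{+}$, and concludes by $T^{+}TT^{+}=T^{+}$; your decomposition $\overline{T}=(I+\delta T\,T^{+})T+\delta T(I-T^{+}T)$ on $D(T)$, once evaluated at $x=By\in R(T^{+})$, collapses to exactly the same computation, so that part is essentially the paper's argument in a slightly more expansive form (with the domain bookkeeping, which the paper leaves implicit, spelled out). The genuine difference is the closedness of $\overline{T}$: the paper simply cites its Theorem \ref{The1.2}, whose closedness conclusion is, as stated, conditional on one of the equivalent conditions $(1)$--$(4)$ holding, whereas Lemma \ref{Lem2.4} asserts closedness unconditionally. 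Your route --- observing that $TT^{+}$ is a nonzero idempotent so $\|TT^{+}\|\geq 1$, hence $b<1$, and then running the standard Kato stability argument from the estimate $(1-b)\|Tu\|\leq\|\overline{T}u\|+a\|u\|$ together with $\|\delta T(x_{n}-x)\|\leq a\|x_{n}-x\|+b\|T(x_{n}-x)\|$ --- is self-contained and closes that small gap; the only caveat is the degenerate case $T=0$, where $TT^{+}=0$, but there $D(T)=X$ by closedness of $T$ and $\overline{T}=\delta T$ is bounded and everywhere defined, hence closed. Your explicit verification of $R(B)=R(T^{+})$ and $N(B)=N(T^{+})$ supplies details the paper dismisses as easy; it is correct.
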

\begin{proof} It follows from Theorem \ref{The1.2} that $\overline{T}$ is closed. Since
\begin{eqnarray*}
\|\delta TT^+ y\|&\leq&a\|T^+ y\|+b\|TT^+ y\|\\
&\leq&(a\|T^+\|+b\|TT^+\|)\|y\|\,,
\end{eqnarray*}
for all $y \in Y$, we get
$
\|\delta TT^+\|\leq
 a\|T^+\|+b\|TT^+\|<1.
$
By the celebrated Banach Lemma, the inverse of
$I+\delta TT^+$ exists and $(I+\delta TT^+)^{-1}\in B(Y)$.
 Hence $B=T^+(I+\delta
TT^+)^{-1}: Y\to X$ is a bounded linear operator. It is easy to verify $R(B)=R(T^+)$ and
 $N(B)=N(T^+)$.
 To the end, we need to
show $B\overline{T}B=B$ on $Y$. Indeed, $R(B)=R(T^+)\subseteq D(T)=D(\overline{T})$. Since
\begin{eqnarray*}
(I+\delta TT^+)TT^+=TT^++\delta TT^+=(T+\delta T)T^+=\overline{T}T^+,
\end{eqnarray*}
we have $TT^+=(I+\delta TT^+)^{-1}\overline{T}T^+$ and
therefore,\begin{eqnarray*}
B\overline{T}B&=&T^+(I+\delta TT^+)^{-1}\overline{T}T^+(I+\delta
TT^+)^{-1} \\
 &=&T^+TT^+(I+\delta
TT^+)^{-1}=T^+(I+\delta TT^+)^{-1}=B.\end{eqnarray*}
\end{proof}

In the next theorem some conditions are given for $\overline{T}$ to have the Hyers--Ulam stability as well as the Hyers--Ulam stability constant $K_{\overline{T}}$ is explicitly expressed.

\begin{theorem}\label{The2.2} Let $X$, $Y$ be Hilbert spaces and let $T\in C(X,Y)$ have the Hyers--Ulam stability. Let $T^+\in B(Y,X)$  be a bounded generalized inverse of $T$ and  $\delta T\in
L(X,Y)$ be $T-$bounded with constants $a,b$. If
 $a\|T^+\|+b\|TT^+\|<1$,
 then the following statements are
equivalent: \par\noindent $(1)$\ $B=T^+(I+\delta TT^+)^{-1}:\ Y\to X$
 is a bounded generalized inverse of $\overline{T}=T+\delta T;$
 \par\noindent $(2)$\ $(I+\delta T T^+)^{-1}R(\overline{T})= R(T);$
 \par\noindent
$(3)$\ $(I+\delta TT^+)^{-1}\overline{T}$ maps $N(T)$ into $R(T);$
\par\noindent $(4)$ $R(\overline{T})\cap N(T^+)=\{0\}.$ \par
 Moreover, if one of the conditions above is true, then $R(\overline{T})$ is closed, $\overline{T}$ has the Hyers--Ulam stability and $K_{\overline{T}}=\|\overline{T}^\dagger\|$, where
 \begin{eqnarray*}
\overline{T}^\dagger&=&\{I-[(T^+(I+\delta T T^+)^{-1}\overline{T})^{**}-(T^+(I+\delta T T^+)^{-1}\overline{T})^*]^2\}^{-1}\\
&&
[T^+(I+\delta T T^+)^{-1}\overline{T}]^*T^+(I+\delta T T^+)^{-1}[\overline{T}T^+(I+\delta T T^+)^{-1}]^* \\
&&\{I-[\overline{T}T^+(I+\delta T T^+)^{-1}-(\overline{T}T^+(I+\delta T T^+)^{-1})^*]^2\}^{-1}.
 \end{eqnarray*}
\end{theorem}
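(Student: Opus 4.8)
The plan is to recognize that the equivalence of statements $(1)$--$(4)$ is already contained in Theorem \ref{The1.2}, so the genuinely new content lies in the ``moreover'' part: establishing the Hyers--Ulam stability of $\overline{T}$, identifying its constant, and deriving the closed-form expression for $\overline{T}^\dagger$. Since $X,Y$ are Hilbert spaces (hence Banach spaces) and $T$ has a bounded generalized inverse $T^+$ by hypothesis, the hypotheses of Theorem \ref{The1.2} are satisfied verbatim. I would therefore begin by invoking Theorem \ref{The1.2} to obtain at once the equivalence of $(1)$--$(4)$ together with the conclusion that, whenever one of them holds, $\overline{T}$ is closed and $R(\overline{T})$ is closed.

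Next I would transfer this closedness into the language of Hyers--Ulam stability. Having $R(\overline{T})$ closed, Theorem \ref{The2.1} (applied to $\overline{T}\in C(X,Y)$) immediately yields that $\overline{T}$ has the Hyers--Ulam stability, that $\overline{T}$ possesses a bounded Moore--Penrose inverse $\overline{T}^\dagger$, and that $K_{\overline{T}}=\|\overline{T}^\dagger\|=\gamma(\overline{T})^{-1}$. This disposes of every assertion in the ``moreover'' clause except the explicit formula.

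For the formula, the key observation is that under condition $(1)$ the operator $B=T^+(I+\delta TT^+)^{-1}$ is itself a bounded generalized inverse of $\overline{T}$. Lemma \ref{Lem2.4} already records the structural facts $B\overline{T}B=B$, $R(B)=R(T^+)$ and $N(B)=N(T^+)$, while condition $(1)$ supplies the remaining generalized-inverse axioms. I would then apply Lemma \ref{Lem2.3} to the pair $(\overline{T},B)$ in the roles of $(T,T^+)$, obtaining
$$\overline{T}^\dagger=\{I-[(B\overline{T})^{**}-(B\overline{T})^*]^2\}^{-1}(B\overline{T})^*\,B\,(\overline{T}B)^*\{I-[\overline{T}B-(\overline{T}B)^*]^2\}^{-1}.$$
Substituting $B=T^+(I+\delta TT^+)^{-1}$, so that $B\overline{T}=T^+(I+\delta TT^+)^{-1}\overline{T}$ and $\overline{T}B=\overline{T}T^+(I+\delta TT^+)^{-1}$, reproduces the displayed expression for $\overline{T}^\dagger$ term by term.

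The main obstacle is a bookkeeping one: one must check that Lemma \ref{Lem2.3} is legitimately applicable, i.e. that $B$ is a \emph{full} bounded generalized inverse of $\overline{T}$ in the sense of Definition \ref{Def1.3}, and not merely an operator satisfying $B\overline{T}B=B$. This is precisely what condition $(1)$ guarantees, so the substitution argument is valid only after that hypothesis is in force; Lemma \ref{Lem2.4} on its own does not deliver a generalized inverse. Once this point is secured, the remainder is a direct substitution with no estimates to carry out.
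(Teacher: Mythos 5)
Your proposal is correct and follows essentially the same route as the paper: the equivalence of $(1)$--$(4)$ and the closedness of $R(\overline{T})$ come from Theorem \ref{The1.2}, the Hyers--Ulam stability and the identity $K_{\overline{T}}=\|\overline{T}^\dagger\|$ come from Theorem \ref{The2.1}, and the explicit formula is obtained by applying Lemma \ref{Lem2.3} to the pair $(\overline{T},B)$ with $B=T^+(I+\delta TT^+)^{-1}$. Your added remark that condition $(1)$ (rather than Lemma \ref{Lem2.4} alone) is what licenses the use of Lemma \ref{Lem2.3} is exactly the point the paper relies on implicitly.
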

\begin{proof}  From Theorem \ref{The1.2}, we can see the equivalence. If one of the conditions is true, then $R(\overline{T})$ is closed and $T^+(I+\delta TT^+)^{-1}:\ Y\to X$
 is a bounded generalized inverse of $T+\delta T$. By Lemma \ref{Lem2.3}, we can get what we desired.\end{proof}
 \begin{remark}\label{Rem2.1} It should be noted that under any one of the conditions in Theorem \ref{The2.2}, the Hyers--Ulam stability constant $K_{\overline{T}}$ is continuous at $T$ if $\delta T$
 is bounded and $\|\delta T\|\to 0$. In fact,
 \begin{eqnarray*}&&T^+(I+\delta T T^+)^{-1}\overline{T}=(I+T^+\delta T)^{-1}T^+\overline{T}\\
 &=&(I+T^+\delta T)^{-1}T^+T+(I+T^+\delta T)^{-1}T^+\delta T\rightarrow T^+T\end{eqnarray*}
 and\begin{eqnarray*} &&
 \overline{T}T^+(I+\delta T T^+)^{-1}\\&=&TT^+(I+\delta T T^+)^{-1}+\delta TT^+(I+\delta T T^+)^{-1}\rightarrow TT^+\,,
\end{eqnarray*}
as $\|\delta T\|\to 0$.
\end{remark}
 \begin{corollary}\label{Cor2.2} Let $X$, $Y$ be Hilbert spaces and let $T\in C(X,Y)$ have the Hyers--Ulam stability. Let $T^+\in B(Y,X)$  be a bounded generalized inverse of $T$ and  $\delta T\in
L(X,Y)$ be $T-$bounded with constants $a,b$. If
 $a\|T^+\|+b\|TT^+\|<1$ and $N(T)\subseteq N(\delta T),$ then $\overline{T}=T+\delta T$ has the Hyers--Ulam stability and $K_{\overline{T}}=\|\overline{T}^\dagger\|,$ where
\begin{eqnarray*}
\overline{T}^\dagger&=&\{I-[(T^+T)^{**}-(T^+T)^*]^2\}^{-1}(T^+T)^*T^+(I+\delta T T^+)^{-1}[\overline{T}T^+(I+\delta T T^+)^{-1}]^*
\\&&\{I-[\overline{T}T^+(I+\delta T T^+)^{-1}-((\overline{T}T^+(I+\delta T T^+)^{-1})^*]^2\}^{-1}.
 \end{eqnarray*}
\end{corollary}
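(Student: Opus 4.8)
The plan is to deduce this corollary from Theorem \ref{The2.2} by showing that the hypothesis $N(T)\subseteq N(\delta T)$ forces the factorization $\overline{T}=(I+\delta TT^+)T$ on $D(T)$, which both verifies one of the equivalent conditions of Theorem \ref{The2.2} and collapses the first two factors of its formula for $\overline{T}^\dagger$.

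First I would establish the factorization. For every $x\in D(T)$ we have, by property $(1)$ of Definition \ref{Def1.3}, that $T(I-T^+T)x=Tx-TT^+Tx=0$, so $(I-T^+T)x\in N(T)\subseteq N(\delta T)$; here $T^+Tx\in R(T^+)\subseteq D(T)\subseteq D(\delta T)$, so the expression is legitimate. Hence $\delta T(I-T^+T)=0$ on $D(T)$, i.e. $\delta T=\delta TT^+T$ there, and therefore
$$\overline{T}=T+\delta T=T+\delta TT^+T=(I+\delta TT^+)T\qquad\text{on }D(T).$$
Since $a\|T^+\|+b\|TT^+\|<1$, Lemma \ref{Lem2.4} gives $(I+\delta TT^+)^{-1}\in B(Y)$, so left multiplication yields $(I+\delta TT^+)^{-1}\overline{T}=T$ on $D(T)$.

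This last identity makes condition $(3)$ of Theorem \ref{The2.2} immediate: on $D(T)$ the operator $(I+\delta TT^+)^{-1}\overline{T}$ equals $T$, which sends $N(T)$ to $\{0\}\subseteq R(T)$. Thus Theorem \ref{The2.2} applies: $R(\overline{T})$ is closed, $\overline{T}$ has the Hyers--Ulam stability, and $K_{\overline{T}}=\|\overline{T}^\dagger\|$ with $\overline{T}^\dagger$ given by the expression there. To obtain the simplified formula I would compose the identity above with $T^+$ to get $T^+(I+\delta TT^+)^{-1}\overline{T}=T^+T$, and substitute this into the first factor $\{I-[(T^+(I+\delta TT^+)^{-1}\overline{T})^{**}-(T^+(I+\delta TT^+)^{-1}\overline{T})^*]^2\}^{-1}$ and into the adjoint factor $[T^+(I+\delta TT^+)^{-1}\overline{T}]^*$, turning them into $\{I-[(T^+T)^{**}-(T^+T)^*]^2\}^{-1}$ and $(T^+T)^*$ respectively, while the two factors built from $\overline{T}T^+(I+\delta TT^+)^{-1}$ remain untouched; this is precisely the claimed $\overline{T}^\dagger$.

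The proof is short, and the only point requiring care is the first step: one must correctly identify $(I-T^+T)x$ as lying in $N(T)$ and keep track of domains so that $\delta TT^+T$ makes sense on $D(T)$. Once the clean factorization $\overline{T}=(I+\delta TT^+)T$ is secured, the remainder is a direct appeal to Theorem \ref{The2.2} together with routine substitution, so I do not anticipate any serious obstacle beyond this bookkeeping.
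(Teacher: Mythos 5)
Your proof is correct and follows essentially the same route as the paper's: both hinge on the identity $\delta T=\delta TT^+T$ (equivalently $\overline{T}=(I+\delta TT^+)T$ on $D(T)$) derived from $N(T)\subseteq N(\delta T)$, after which Theorem \ref{The2.2} applies and the first two factors of its formula collapse to $\{I-[(T^+T)^{**}-(T^+T)^*]^2\}^{-1}(T^+T)^*$. The paper phrases this collapse through $N(\overline{T})=N(T)$ and $P_{N(\overline{T})}^\perp=P_{N(T)}^\perp$ before invoking Lemma \ref{Lem2.3}, but that is only a cosmetic difference from your direct verification of condition $(3)$ and substitution of $T^+(I+\delta TT^+)^{-1}\overline{T}=T^+T$.
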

\begin{proof} We first show that $N(\overline{T})=N(T)$. In fact, by $
N(T)\subseteq N(\delta T)$, we have $ N(T)\subseteq
N(\overline{T})$ as well as $\delta T(I-T^+T)=0$, whence
$\delta T=\delta TT^+T$. If $x\in N(\overline{T})$, then
$$0=\overline{T}x=Tx+\delta Tx=Tx+\delta TT^+Tx=
 (I+\delta TT^+)Tx.$$ Since $I+\delta TT^+$ is invertible,
 $Tx=0$. Hence $N(\overline{T})=N(T)$. Thus
 $P_{N(\overline{T})}^\perp=P_{N(T)}^\perp$ and
 $P_{N(\overline{T})}^\perp|_{D(T)}=I-\{I-[(I-T^+T)^{**}-(I-T^+T)^*]^2\}^{-1}[I-(I-T^+T)^*]T^+T$. Therefore, by Lemma \ref{Lem2.3},
we can get what we desired.
\end{proof}
 \begin{corollary}\label{Cor2.3}
Let $X$, $Y$ be Hilbert spaces and let $T\in C(X,Y)$ have the Hyers--Ulam stability. Let $T^+\in B(Y,X)$  be a bounded generalized inverse of $T$ and  $\delta T\in
L(X,Y)$ be $T-$bounded with constants $a,b$. If
 $a\|T^+\|+b\|TT^+\|<1$ and  $R(\delta T)\subseteq R(T)$,
 then $\overline{T}=T+\delta T$ has the Hyers--Ulam stability and $K_{\overline{T}}=\|\overline{T}^\dagger\|,$ where\begin{eqnarray*}
\overline{T}^\dagger&=&\{I-[(T^+(I+\delta T T^+)^{-1}\overline{T})^{**}-(T^+(I+\delta T T^+)^{-1}\overline{T})^*]^2\}^{-1}\\
&&
[T^+(I+\delta T T^+)^{-1}\overline{T}]^*T^+(I+\delta T T^+)^{-1}(TT^+)^*\{I-[TT^+-(TT^+)^*]^2\}^{-1}.
 \end{eqnarray*}
\end{corollary}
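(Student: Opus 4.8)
The plan is to treat this as the range--side dual of Corollary \ref{Cor2.2}: invoke Theorem \ref{The2.2} to obtain the Hyers--Ulam stability of $\overline{T}$ together with the general expression for $\overline{T}^\dagger$, and then to collapse the factor built from the oblique projector $\overline{T}T^+(I+\delta TT^+)^{-1}$ onto $R(\overline{T})$ down to the factor built from $TT^+$, exploiting $R(\delta T)\subseteq R(T)$. Where Corollary \ref{Cor2.2} uses $N(T)\subseteq N(\delta T)$ to freeze the null--space factor, here the range factor is the one that simplifies.

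First I would verify one of the equivalent conditions of Theorem \ref{The2.2}. For every $x\in D(T)$ we have $Tx\in R(T)$ and $\delta Tx\in R(\delta T)\subseteq R(T)$, so $\overline{T}x\in R(T)$ and hence $R(\overline{T})\subseteq R(T)$. By the Remark following Definition \ref{Def1.4}, $TT^+$ is a projector of $Y$ onto $R(T)$ with $N(TT^+)=N(T^+)$, so $N(T^+)$ is a topological complement of $R(T)$ and in particular $R(T)\cap N(T^+)=\{0\}$. Therefore $R(\overline{T})\cap N(T^+)\subseteq R(T)\cap N(T^+)=\{0\}$, which is condition $(4)$. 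Theorem \ref{The2.2} then yields that $\overline{T}$ is closed with closed range, has the Hyers--Ulam stability, that $B:=T^+(I+\delta TT^+)^{-1}$ is a bounded generalized inverse of $\overline{T}$, and that $K_{\overline{T}}=\|\overline{T}^\dagger\|$ with $\overline{T}^\dagger$ given by the formula of Theorem \ref{The2.2}, i.e. by Lemma \ref{Lem2.3} applied to $\overline{T}$ and $B$.

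The key step, and the one carrying the content of the hypothesis $R(\delta T)\subseteq R(T)$, is the identity $\overline{T}B=TT^+$. I would establish it via the commutation of $I+\delta TT^+$ with the projector $TT^+$: since $R(\delta T)\subseteq R(T)$ and $TT^+$ fixes $R(T)$ we get $TT^+\delta TT^+=\delta TT^+$, while $T^+TT^+=T^+$ gives $\delta TT^+\,TT^+=\delta TT^+$; hence $(I+\delta TT^+)TT^+=TT^++\delta TT^+=TT^+(I+\delta TT^+)$. Recalling from Lemma \ref{Lem2.4} that $\overline{T}T^+=(I+\delta TT^+)TT^+$, this commutation gives
$$\overline{T}B=\overline{T}T^+(I+\delta TT^+)^{-1}=(I+\delta TT^+)TT^+(I+\delta TT^+)^{-1}=TT^+.$$
Equivalently, using $B\overline{T}B=B$ and $N(B)=N(T^+)$ from Lemma \ref{Lem2.4}, the operators $\overline{T}B$ and $TT^+$ are both projectors onto $R(\overline{T})=R(T)$ along $N(T^+)$, hence equal.

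Finally, substituting $\overline{T}B=TT^+$ into the expression of Lemma \ref{Lem2.3} for $\overline{T}^\dagger$ collapses the range--side factor $B(\overline{T}B)^*\{I-[\overline{T}B-(\overline{T}B)^*]^2\}^{-1}$ to $B(TT^+)^*\{I-[TT^+-(TT^+)^*]^2\}^{-1}=T^+(I+\delta TT^+)^{-1}(TT^+)^*\{I-[TT^+-(TT^+)^*]^2\}^{-1}$, while the null--space factor built from $B\overline{T}=T^+(I+\delta TT^+)^{-1}\overline{T}$ is left untouched; this is precisely the stated formula. I expect the only real obstacle to be the identity $\overline{T}B=TT^+$; once the commutation $TT^+\delta TT^+=\delta TT^+$ is in hand, everything else is the bookkeeping already carried out in Lemma \ref{Lem2.3}.
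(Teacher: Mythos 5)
Your proposal is correct and follows essentially the same route as the paper: both arguments reduce to Theorem \ref{The2.2} and Lemma \ref{Lem2.3}, with the whole content of the hypothesis $R(\delta T)\subseteq R(T)$ concentrated in the identity $\overline{T}T^+(I+\delta TT^+)^{-1}=TT^+$ (the paper writes $\delta T=TT^+\delta T$ and hence $\overline{T}T^+=TT^+(I+\delta TT^+)$, which is the same computation as your commutation of $TT^+$ with $I+\delta TT^+$). Your explicit verification of condition $(4)$ via $R(\overline{T})\cap N(T^+)\subseteq R(T)\cap N(T^+)=\{0\}$ is a point the paper leaves implicit, but otherwise the two proofs coincide.
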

\begin{proof} We first show $R(\overline{T})=R(T)$. In fact, by $R(\delta
T)\subseteq R(T)$, we have $R(\overline{T})\subseteq R(T)$ and also $(I-
TT^+)\delta T=0$, which implies $\delta T=TT^+\delta T
$. Then
$$
\overline{T}T^+=(T+\delta
T)T^+=TT^++TT^+\delta
TT^+=TT^+(I+\delta TT^+)
$$
and $\overline{T}T^+(I+\delta TT^+)^{-1}=TT^+$.
Hence $T=TT^+T=\overline{T}T^+(I+\delta
TT^+)^{-1}T.$ This means $R(T)\subseteq R(\overline{T})$.
Thus $R(\overline{T})=R(T)$ and
$$P_{R(\overline{T})}^\perp=P_{R(T)}^\perp=TT^+(TT^+)^*\{I-[TT^+-(TT^+)^*]^2\}^{-1}.$$  By $R(I-TT^+)=N(T^+)=N(B)$, we have
$T^+(I+\delta TT^+)^{-1}TT^+=T^+(I+\delta TT^+)^{-1}$. Therefore by Lemma \ref{Lem2.3},
we can get what we desired.
\end{proof}
\begin{remark}\label{Rem2.2} It should be noted that if $N(T)\subseteq N(\delta T) $ or $R(\delta T)\subseteq R(T)$ holds, then the Hyers--Ulam stability constant $K_{\overline{T}}$ is continuous at $T$ as $\delta T$
 is bounded and $\|\delta T\|\to 0$.
\end{remark}

\noindent Recall that a closed operator $T: X \to Y$ is called left
semi-Fredholm if $\dim N(T)<\infty$ and $R(T)$ is closed. It is
called right semi-Fredholm if ${\rm codim}R(T)<\infty$ and $R(T)$ is
closed. We say a closed operator $T$ is semi-Fredholm if it is left
or right semi-Fredholm. The following result involving semi-Fredholm
operators holds.

\begin{theorem}\label{The2.3} Let $X$, $Y$ be Hilbert spaces and let $T\in C(X,Y)$ be a semi-Fredholm operator. Let $T^+\in B(Y,X)$  be a bounded generalized inverse of $T$ and  $\delta T\in
L(X,Y)$ be $T-$bounded with constants $a,b$. If
 $a\|T^+\|+b\|TT^+\|<1$ and
 $$ either \ \dim N(\overline{T})=\dim N(T)<\infty\quad or \quad {\rm codim}R(\overline{T})={\rm codim}R(T)<\infty,$$
 then $\overline{T}=T+\delta T$ has the Hyers--Ulam stability and $K_{\overline{T}}=\|\overline{T}^\dagger\|$, where
 \begin{eqnarray*}
\overline{T}^\dagger&=&\{I-[(T^+(I+\delta T T^+)^{-1}\overline{T})^{**}-(T^+(I+\delta T T^+)^{-1}(T+\delta T))^*]^2\}^{-1}\\
&&
[T^+(I+\delta T T^+)^{-1}\overline{T}]^*T^+(I+\delta T T^+)^{-1}[\overline{T}T^+(I+\delta T T^+)^{-1}]^*\\
&& \{I-[\overline{T}T^+(I+\delta T T^+)^{-1}-(\overline{T}T^+(I+\delta T T^+)^{-1})^*]^2\}^{-1}.
 \end{eqnarray*}
\end{theorem}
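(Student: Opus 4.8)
The plan is to reduce the whole theorem to verifying condition $(4)$ of Theorem \ref{The2.2}, namely $R(\overline{T})\cap N(T^{+})=\{0\}$. Once this is in hand, Theorem \ref{The2.2} immediately gives that $R(\overline{T})$ is closed, that $\overline{T}$ has the Hyers--Ulam stability with $K_{\overline{T}}=\|\overline{T}^{\dagger}\|$, and that $\overline{T}^{\dagger}$ is the displayed operator (this is exactly the expression produced by applying Lemma \ref{Lem2.3} to the bounded generalized inverse $B=T^{+}(I+\delta TT^{+})^{-1}$ of $\overline{T}$). So the entire task is to establish $R(\overline{T})\cap N(T^{+})=\{0\}$, which I treat separately for the two alternatives. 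Throughout I write $A=(I+\delta TT^{+})^{-1}\in B(Y)$ and $B=T^{+}A$; recall from Lemma \ref{Lem2.4} and its proof that $\overline{T}$ is closed, $B\overline{T}B=B$, $R(B)=R(T^{+})$, $N(B)=N(T^{+})$, and $A\overline{T}T^{+}=TT^{+}$. Two ``identity'' reductions drive both counts: since $T^{+}n=0$ for $n\in N(T^{+})$ we get $An=n$, so $A$ is the identity on $N(T^{+})$ and fixes it setwise; and since $B\overline{T}B=B$, the operator $B\overline{T}$ is the identity on $R(B)=R(T^{+})$.

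For the first alternative, assume $\dim N(\overline{T})=\dim N(T)<\infty$. As $B$ is bounded, $N(\overline{T})\subseteq N(B\overline{T})$. Using $An=n$ on $N(T^{+})$ and $AN(T^{+})=N(T^{+})$ one checks that $x\in N(B\overline{T})$ forces $\overline{T}x\in N(T^{+})$, and conversely every $y\in R(\overline{T})\cap N(T^{+})$ equals $\overline{T}x$ for some $x\in N(B\overline{T})$; hence $\overline{T}\big(N(B\overline{T})\big)=R(\overline{T})\cap N(T^{+})$, while the kernel of $\overline{T}|_{N(B\overline{T})}$ is $N(\overline{T})$. Next, $P=I-T^{+}T$ maps $D(T)$ into $N(T)$ and is injective on $N(B\overline{T})$: if $x\in N(B\overline{T})$ and $Px=0$ then $x=T^{+}Tx\in R(T^{+})=R(B)$, so $B\overline{T}x=x$; but $B\overline{T}x=0$, whence $x=0$. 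Therefore $\dim N(B\overline{T})\le\dim N(T)<\infty$, and the rank--nullity relation for $\overline{T}|_{N(B\overline{T})}$ yields
$$\dim\big(R(\overline{T})\cap N(T^{+})\big)=\dim N(B\overline{T})-\dim N(\overline{T})\le\dim N(T)-\dim N(\overline{T})=0,$$
so $R(\overline{T})\cap N(T^{+})=\{0\}$.

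For the second alternative, assume $\mathrm{codim}\,R(\overline{T})=\mathrm{codim}\,R(T)<\infty$. I first establish, with no extra hypothesis, that $R(\overline{T})+N(T^{+})=Y$. Writing $D(T)=N(T)\oplus R(T^{+})$ and using $\overline{T}|_{N(T)}=\delta T|_{N(T)}$ together with $A\overline{T}T^{+}=TT^{+}$ gives $AR(\overline{T})=R(T)+A\,\delta T(N(T))$, so $AR(\overline{T})+N(T^{+})\supseteq R(T)+N(T^{+})=Y$, since $Y=R(T)\oplus N(T^{+})$. As $A$ fixes $N(T^{+})$ setwise, $A\big(R(\overline{T})+N(T^{+})\big)=AR(\overline{T})+N(T^{+})=Y$, and applying $A^{-1}$ gives $R(\overline{T})+N(T^{+})=Y$. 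Because $N(T^{+})$ is a complement of $R(T)$, we have $\dim N(T^{+})=\mathrm{codim}\,R(T)<\infty$, and the isomorphism $N(T^{+})/\big(N(T^{+})\cap R(\overline{T})\big)\cong Y/R(\overline{T})$ yields
$$\dim\big(R(\overline{T})\cap N(T^{+})\big)=\dim N(T^{+})-\dim\big(Y/R(\overline{T})\big)=\mathrm{codim}\,R(T)-\mathrm{codim}\,R(\overline{T})=0.$$
Again $R(\overline{T})\cap N(T^{+})=\{0\}$, so condition $(4)$ of Theorem \ref{The2.2} holds and the conclusion follows.

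The main obstacle is that the two alternatives cannot be merged into one count: in the purely left situation $\mathrm{codim}\,R(T)=\dim N(T^{+})$ may be infinite, so the codimension bookkeeping of the second part is unavailable, and symmetrically in the right situation $\dim N(T)$ may be infinite, so the nullity count of the first part fails. What makes each count go through is the pair of identity reductions $A|_{N(T^{+})}=I$ and $B\overline{T}|_{R(T^{+})}=I$, and, in the second part, the unconditional identity $R(\overline{T})+N(T^{+})=Y$, whose proof is the real work; the stated nullity (resp. codimension) hypothesis is then invoked only at the very last step to collapse an inequality (resp. a difference) into the desired equality $R(\overline{T})\cap N(T^{+})=\{0\}$.
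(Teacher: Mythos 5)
Your proof is correct, and it follows the same overall strategy as the paper: everything is funneled into verifying condition $(4)$ of Theorem \ref{The2.2}, $R(\overline{T})\cap N(T^{+})=\{0\}$, after which Theorem \ref{The2.2} and Lemma \ref{Lem2.3} deliver the closedness of $R(\overline{T})$, the stability, and the displayed formula. Where you genuinely diverge is in how the two dimension counts are executed. The paper first shows that $B\overline{T}$ is \emph{bounded} on $D(\overline{T})$ (using the $T$-boundedness of $\delta T$ through the identity $\overline{T}(I-T^{+}T)=\delta T(I-T^{+}T)$), extends it by continuity to an idempotent $S\in B(X)$ with $R(S)=\overline{R(T^{+})}$ and $N(S)=\overline{N(B\overline{T})}$, and then compares the complements of $\overline{R(T^{+})}$ in $X$ (resp.\ of $R(\overline{T}B)$ in $Y$) to deduce $N(\overline{T})=\overline{N(B\overline{T})}$ in the kernel case and $\dim N(T^{+})=\dim N^{-}$ in the range case. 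Your argument is purely algebraic: in the kernel case you identify $\overline{T}\bigl(N(B\overline{T})\bigr)=R(\overline{T})\cap N(T^{+})$, bound $\dim N(B\overline{T})\leq\dim N(T)$ via the injectivity of $I-T^{+}T$ on $N(B\overline{T})$, and apply rank--nullity; in the range case you prove $R(\overline{T})+N(T^{+})=Y$ unconditionally from $D(T)=N(T)\oplus R(T^{+})$ and $A\overline{T}T^{+}=TT^{+}$, then apply the second isomorphism theorem. This buys you a cleaner proof: you never need the boundedness of $B\overline{T}$, the extension $S$, or the closure $\overline{N(B\overline{T})}$, and the role of each finiteness hypothesis is isolated to a single final cancellation. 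The paper's route, in exchange, produces the decompositions (\ref{(2.2)}) and (\ref{(2.3)}) as reusable structural facts. All the individual steps you use ($A|_{N(T^{+})}=I$, $B\overline{T}|_{R(T^{+})}=I$, $Y=R(T)\oplus N(T^{+})$, $R(B)=R(T^{+})$, $N(B)=N(T^{+})$) are correctly justified from Lemma \ref{Lem1.1} and Lemma \ref{Lem2.4}, so I see no gap.
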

\begin{proof} By Lemma \ref{Lem2.4}, $B=B\overline{T}B$ on $Y$, where $B=T^+(I+\delta
TT^+)^{-1}$. Then $B\overline{T}B\overline{T}=B\overline{T}$ and it
follows that $B\overline{T}$ is an idempotent on $D(\overline{T})$.
Due to $\overline{T}(I-T^+ T)=(T+\delta T)(I-T^+ T)=\delta T(I-T^+
T)$ and
\begin{eqnarray*}\|[\delta T (I-T^+ T)]x\|&\leq&
a\|(I-T^+ T)x\|+b\|T(I-T^+T)x\|\\ &\leq&
a\|I-T^+ T\|\cdot\|x\|\qquad(x \in D(T))
\end{eqnarray*}
we conclude that
 \begin{eqnarray*}
B\overline{T}&=&T^+[I+(\overline{T}-T)T^+]^{-1}\overline{T}\\
&=&T^+[I+(\overline{T}-T)T^+]^{-1}\overline{T}T^+T+T^+[I+(\overline{T}-T)T^+]^{-1}\overline{T}(I-T^+T)\\
&=&T^+T+T^+[I+(\overline{T}-T)T^+]^{-1}\overline{T}(I-T^+T)
 \end{eqnarray*}
is bounded. Hence $B\overline{T}$ can be extended uniquely from $D(\overline{T})$ to $X$. We denote its extension by $S$, which is defined by $Sx=\lim\limits_{n\rightarrow +\infty}B\overline{T}x_n$ for all $x\in X$, where $x_n\in D(\overline{T})$ satisfies $x_n\rightarrow x$.  It is easy to verify $S\in B(X)$ and  $S^2=S$. Since $R(B\overline{T})\subset R(S)\subset\overline{R(B\overline{T})}$, we get $R(S)=\overline{R(B\overline{T})}=\overline{R(B)}=\overline{R(T^+)}$. Next, for any $x\in N(S)$, then there exists $\{x_n\}\subset D(\overline{T})$ with $x_n\rightarrow x$ such that $B\overline{T}x_n\rightarrow Sx=0$. Hence
 $x_n-B\overline{T}x_n\in  N(B\overline{T})$ satisfies $x_n-B\overline{T}x_n\rightarrow x$, which implies  $N(S)\subset\overline{N(B\overline{T})}$. Because $N(B\overline{T})\subset N(S)$ and $N(S)$ is closed,  $\overline{N(B\overline{T})}\subset N(S)$. Thus we conclude  $N(S)=\overline{N(B\overline{T})}$.
On the other hand, by the Closed Graph Theorem, we can observe that $\overline{T}B$ is a projector from $Y$ onto $R(\overline{T}B)$
 and $N(\overline{T}B)=N(B)=N(T^+)$. Thus by $X=R(S)\oplus N(S)$ and $Y=R(\overline{T}B)\oplus N(B)$, we get
\begin{eqnarray}\label{(2.2)}
\overline{R(T^+)}\oplus N(T)=X=\overline{R(B\overline{T})}\oplus \overline{N(B\overline{T})}=\overline{R(T^+)}\oplus \overline{N(B\overline{T})}
\end{eqnarray}
and
\begin{eqnarray}\label{(2.3)}
R(T)\oplus N(T^+)=Y=R(\overline{T}B)\oplus N(T^+)=R(\overline{T})+N(T^+)=R(\overline{T})\oplus N^-,
\end{eqnarray}
where $N^-$ satisfies $N(T^+)=N^-\oplus(R(\overline{T})\cap
N(T^+))$. If $\dim N(\overline{T})=\dim N(T)<\infty$, then by
(\ref{(2.2)}), we get $\dim \overline{N(B\overline{T})}=\dim
N(\overline{T})$. Noting $N(\overline{T})\subseteq
\overline{N(B\overline{T})}$, we can obtain
$N(\overline{T})=\overline{N(B\overline{T})}$. Hence for any $y\in
R(\overline{T})\cap N(T^+)$, there exists $x\in D(\overline{T})$
such that $y=\overline{T}x$ and $T^+\overline{T}x=0$. This means
$\overline{T}x\in N(T^+)=N(B)$. Thus $x\in N(B\overline{T})$ and so
$x\in N(\overline{T})$, which implies $y=\overline{T}x=0$. Therefore
$R(\overline{T})\cap N(T^+)=\{0\}$. If ${\rm
codim}R(\overline{T})={\rm codim}R(T)<\infty,$ then by
(\ref{(2.3)}), $\dim N(T^+)=\dim N^-$ and so $R(\overline{T})\cap
N(T^+)=\{0\}$. Using Theorem \ref{The2.2} we can get what we
desired.\end{proof}

\section{The case of bounded linear operators}

In this section, we shall give some sufficient and necessary conditions for the Hyers--Ulam stability constants to be continuous.
\begin{theorem}\label{The3.1} Let $X$, $Y$ be Hilbert spaces and let $T\in B(X,Y)$ have the Hyers--Ulam stability, i.e, $T$ has a
Moore--Penrose inverse $T^\dagger\in B(Y,X)$. If $\delta T\in
B(X,Y)$  satisfies
$\|\delta T\| \|T^\dagger\| <1$, then the following statements are
equivalent: \par\noindent $(1)$\ $B=T^\dagger(I+\delta TT^\dagger)^{-1}:\ Y\to X$
 is a bounded generalized inverse of $\overline{T}=T+\delta T;$
 \par\noindent $(2)$\ $(I+\delta T T^\dagger)^{-1}R(\overline{T})= R(T);$
 \par\noindent
$(3)$\ $(I+\delta TT^\dagger)^{-1}\overline{T}$ maps $N(T)$ into $R(T);$
\par\noindent
$(4)$ $(I+T^\dagger\delta T)^{-1}N(T)=N(\overline{T});$
\par\noindent $(5)$ $R(\overline{T})\cap N(T^\dagger)=\{0\};$ \par\noindent
$(6)$\ $\overline{T}$ has the Hyers--Ulam stability and the Hyers--Ulam stability constant $K_{\overline{T}}$
satisfies $$\lim_{\|\delta T\|\to 0}K_{\overline{T}}=K_T;$$
$(7)$\ $\overline{T}$ has the Hyers--Ulam stability and there exist $M>0$ and $\varepsilon>0$ such that
$K_{\overline{T}}\leq M$ for all $\|\delta T\|<\varepsilon$; \par\noindent
 $(8)$\ $\overline{T}$ has the bounded Moore--Penrose inverse $\overline{T}^\dagger\in B(Y,X)$ with $\lim\limits_{\|\delta T\|\to 0}\overline{T}^\dagger=T^\dagger.$ \par
 In this case, $K_{\overline{T}}=\|\overline{T}^\dagger\|$ and
 \begin{eqnarray*}
\overline{T}^\dagger&=&\{I-[T^+(I+\delta T T^+)^{-1}\overline{T}-(T^+(I+\delta T T^+)^{-1}\overline{T})^*]^2\}^{-1}\\
&&
[T^+(I+\delta T T^+)^{-1}\overline{T}]^*T^+(I+\delta T T^+)^{-1}[\overline{T}T^+(I+\delta T T^+)^{-1}]^*\\
&& \{I-[\overline{T}T^+(I+\delta T T^+)^{-1}-(\overline{T}T^+(I+\delta T T^+)^{-1})^*]^2\}^{-1}.
 \end{eqnarray*}
\end{theorem}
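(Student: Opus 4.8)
The plan is to dispose of the structural conditions $(1)$--$(5)$ by specializing the perturbation machinery already in hand, and then to bootstrap to the continuity conditions $(6)$--$(8)$. Since $T\in B(X,Y)$, the perturbation $\delta T$ is $T$-bounded with constants $a=\|\delta T\|$ and $b=0$, and the hypothesis $\|\delta T\|\,\|T^\dagger\|<1$ is precisely $a\|T^\dagger\|+b\|TT^\dagger\|<1$. Taking the bounded generalized inverse in Theorem \ref{The2.2} to be the Moore--Penrose inverse $T^\dagger$ itself, that theorem (which rests on Theorem \ref{The1.2}) delivers at once the equivalences $(1)\Leftrightarrow(2)\Leftrightarrow(3)\Leftrightarrow(5)$ together with its entire ``moreover'' conclusion: under any of them $R(\overline{T})$ is closed, $\overline{T}$ has the Hyers--Ulam stability, $K_{\overline{T}}=\|\overline{T}^\dagger\|$, and $\overline{T}^\dagger$ is given by the final displayed formula. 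This already settles the closing assertion of the theorem.

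Next I would fold $(4)$ into this block by proving $(4)\Leftrightarrow(5)$ directly. Because $T^\dagger$ is the Moore--Penrose inverse, $I-T^\dagger T$ is the orthogonal projector onto $N(T)$ and $R(T^\dagger)=N(T)^\perp$; writing $I+T^\dagger\delta T=(I-T^\dagger T)+T^\dagger\overline{T}$ gives, for every $x\in X$,
\begin{equation*}
(I+T^\dagger\delta T)x=(I-T^\dagger T)x+T^\dagger\overline{T}x,
\end{equation*}
with the two summands lying in $N(T)$ and $N(T)^\perp$ respectively. From this splitting, $N(\overline{T})\subseteq(I+T^\dagger\delta T)^{-1}N(T)$ is automatic, while for $z\in N(T)$ the vector $x=(I+T^\dagger\delta T)^{-1}z$ always satisfies $T^\dagger\overline{T}x=0$, i.e. $\overline{T}x\in R(\overline{T})\cap N(T^\dagger)$. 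Hence the reverse inclusion $(I+T^\dagger\delta T)^{-1}N(T)\subseteq N(\overline{T})$ holds if and only if $R(\overline{T})\cap N(T^\dagger)=\{0\}$, which is exactly $(4)\Leftrightarrow(5)$.

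For the continuity conditions I would run the cycle $(5)\Rightarrow(8)\Rightarrow(6)\Rightarrow(7)\Rightarrow(5)$, reading $(6)$--$(8)$ over the family $\overline{T}=T+\delta T$ as $\|\delta T\|\to0$. For $(5)\Rightarrow(8)$, the displayed formula exhibits $\overline{T}^\dagger$ as a fixed operator-valued map $\Phi(\delta T)$ that is norm-continuous on the ball $\|\delta T\|\,\|T^\dagger\|<1$, continuity coming from that of $C\mapsto C^{*}$, of multiplication, and of $C\mapsto\{I-(C-C^{*})^{2}\}^{-1}$ near invertible points. As in Remark \ref{Rem2.1}, $T^\dagger(I+\delta TT^\dagger)^{-1}\overline{T}\to T^\dagger T$ and $\overline{T}T^\dagger(I+\delta TT^\dagger)^{-1}\to TT^\dagger$; since $T^\dagger T$ and $TT^\dagger$ are selfadjoint the bracketed commutator-squares vanish and $\Phi(0)=(T^\dagger T)^{*}T^\dagger(TT^\dagger)^{*}=T^\dagger TT^\dagger TT^\dagger=T^\dagger$, so $\overline{T}^\dagger=\Phi(\delta T)\to T^\dagger$. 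Then $(8)\Rightarrow(6)$ is continuity of the norm, $K_{\overline{T}}=\|\overline{T}^\dagger\|\to\|T^\dagger\|=K_T$, and $(6)\Rightarrow(7)$ is immediate, since a finite limit is locally bounded (take $M=K_T+1$).

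The substantive step is the closing implication $(7)\Rightarrow(5)$, which recovers the range condition from a mere uniform bound on the stability constant; this is where the real work lies. Here I would combine $K_{\overline{T}}=\gamma(\overline{T})^{-1}$ (Theorem \ref{The2.1}) with the following sharp estimate. If $(5)$ fails, pick $0\neq y\in R(\overline{T})\cap N(T^\dagger)$ and write $y=\overline{T}x'$ with $x'\in N(\overline{T})^{\perp}$ (projecting any preimage onto $N(\overline{T})^\perp$). Then $\delta Tx'=y-Tx'$ with $Tx'\in R(T)$ and $y\in N(T^\dagger)=R(T)^{\perp}$, so $\|\delta T\|\,\|x'\|\ge\|\delta Tx'\|\ge\|y\|$, whence
\begin{equation*}
\gamma(\overline{T})\le\frac{\|\overline{T}x'\|}{\|x'\|}=\frac{\|y\|}{\|x'\|}\le\|\delta T\|.
\end{equation*}
Thus whenever $(5)$ fails one has $K_{\overline{T}}\ge\|\delta T\|^{-1}$, which is unbounded as $\|\delta T\|\to0$ and contradicts $(7)$; hence $(7)$ forces $(5)$ throughout a neighborhood, closing the equivalences. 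The estimate itself is short, and the genuine obstacle is conceptual: choosing the right test vector $x'\in N(\overline{T})^{\perp}$ and seeing that the perturbation is exactly what caps $\gamma(\overline{T})$, together with the bookkeeping that reconciles the pointwise conditions $(1)$--$(5)$ with the neighborhood-level conditions $(6)$--$(8)$.
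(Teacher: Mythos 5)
Your proposal is correct and logically complete, but it closes the equivalence cycle by a genuinely different route in two places. For $(4)\Leftrightarrow(5)$ the paper simply cites the literature, whereas you give a self-contained proof via the orthogonal splitting $(I+T^\dagger\delta T)x=(I-T^\dagger T)x+T^\dagger\overline{T}x$ with summands in $N(T)$ and $N(T)^\perp$; this checks out and is a nice addition. More substantially, the paper never proves $(7)\Rightarrow(5)$ directly: it goes $(7)\Rightarrow(8)$ by means of the operator identity
$$\overline{T}^\dag-T^\dag=\overline{T}^\dag(\overline{T}^\dag)^*(\overline{T}-T)^*(I-TT^\dag)+\overline{T}^\dag(T-\overline{T})T^\dag+(\overline{T}^\dag\overline{T}-I)(T-\overline{T})^*(T^\dag)^*T^\dag,$$
which yields $\|\overline{T}^\dag-T^\dag\|\leq(\|\overline{T}^\dag\|^2+\|\overline{T}^\dag\|\|T^\dag\|+\|T^\dag\|^2)\|\delta T\|$ and hence convergence once $\|\overline{T}^\dag\|=K_{\overline{T}}\leq M$; it then proves $(8)\Rightarrow(5)$ by a perturbation-of-projectors argument showing $W=I+(P_{N(\overline{T})}^\perp-P_{N(T)}^\perp)P_{N(T)}^\perp$ is invertible. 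Your replacement --- if $(5)$ fails, choose $y=\overline{T}x'\neq0$ with $x'\in N(\overline{T})^\perp$ and use $y\perp Tx'$ to get $\gamma(\overline{T})\leq\|y\|/\|x'\|\leq\|\delta T\|$, hence $K_{\overline{T}}=\gamma(\overline{T})^{-1}\geq\|\delta T\|^{-1}$ --- is correct and arguably sharper: it produces an explicit quantitative dichotomy (either the range condition holds or the stability constant blows up at rate $\|\delta T\|^{-1}$) and avoids the projector machinery entirely. What the paper's detour buys instead is the explicit Lipschitz-type rate for $\overline{T}^\dag\to T^\dag$ in $(8)$, which has independent interest. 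Both arguments share the same mild quantifier looseness in reconciling the pointwise conditions $(1)$--$(5)$ with the neighborhood-level conditions $(6)$--$(8)$ (the paper's ``without loss of generality $\|P_{N(\overline{T})}^\perp-P_{N(T)}^\perp\|<1$'' plays the same role as your ``throughout a neighborhood''), so this is not a defect of your write-up relative to the original. The remaining steps ($(1)$--$(3)$, $(5)$ via Theorem \ref{The2.2} with $a=\|\delta T\|$, $b=0$; $(5)\Rightarrow(8)$ via continuity of the explicit formula and $\Phi(0)=T^\dagger$; $(8)\Rightarrow(6)\Rightarrow(7)$) coincide with the paper's.
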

\begin{proof} In Theorem \ref{The2.2}, we take $a=\|\delta T\|$ and $b=0$. Then it follows  that (1), (2), (3) and (5) are equivalent. The equivalence of (4) and (5) can be found in \cite{7}.
 By Theorem \ref{The2.1} and Theorem \ref{The2.2}, we can see $(5) \Rightarrow (8)$ and $(8) \Rightarrow (6)$. Obviously, $(6) \Rightarrow (7)$.
 Next we shall show $(7)\Rightarrow (8)$ and $(8)\Rightarrow (5)$.
 Since
\begin{eqnarray*}
 &&\overline{T}^\dag-T^\dag\\
 & =&\overline{T}^\dag(I-TT^\dag)+\overline{T}^\dag(T-\overline{T})T^\dag
 +(\overline{T}^\dag \overline{T}-I)T^\dag\\
 & =&\overline{T}^\dag \,\overline{T}\,\overline{T}^\dag(I-TT^\dag)+\overline{T}^\dag(T-\overline{T})T^\dag
+(\overline{T}^\dag\,\overline{T}-I)T^\dag TT^\dag\\
 & =&\overline{T}^\dag[(I-TT^\dag)\overline{T}\,\overline{T}^\dag]^*+\overline{T}^\dag(T-\overline{T})T^\dag
+[T^\dag T(\overline{T}^\dag \overline{T}-I)]^*T^\dag\\
 &=&\overline{T}^\dag[(I-TT^\dag)(\overline{T}-T)\overline{T}^\dag]^*+\overline{T}^\dag(T-\overline{T})T^\dag+[T^\dag (T-\overline{T})(\overline{T}^\dag \overline{T}-I)]^*T^\dag\\
 & =&\overline{T}^\dag(\overline{T}^\dag)^*(\overline{T}-T)^*(I-TT^\dag)+\overline{T}^\dag(T-\overline{T})T^\dag
+(\overline{T}^\dag
 \overline{T}-I)(T-\overline{T})^*(T^\dag)^*T^\dag,
\end{eqnarray*}
we can get
$$||\overline{T}^\dag-T^\dag||\leq
 (||\overline{T}^\dag||^2+||\overline{T}^\dag||||T^\dag||
+ ||T^\dag||^2)||\delta T||.
$$
Combining it with $\|\overline{T}^\dag\|=K_{\overline{T}}\leq M$, we can conclude that
$\lim\limits_{\|\delta T\|\to 0}\overline{T}^\dagger=T^\dagger.$ To the end, we only need to prove $(8) \Rightarrow (5)$. If $\overline{T}$ has the bounded Moore--Penrose inverse $\overline{T}^\dagger$ with $\lim\limits_{\|\delta T\|\to 0}\overline{T}^\dagger=T^\dagger,$
then $P_{N(\overline{T})}^\perp=I-\overline{T}^\dag\overline{T}$ and $ P_{N(T)}^\perp=I-T^\dag T $
and $\lim\limits_{\|\delta T\|\to 0}P_{N(\overline{T})}^\perp=P_{N(T)}^\perp$.
Without loss of generality, we may assume $\|P_{N(\overline{T})}^\perp-P_{N(T)}^\perp\|<1$. Then
\begin{eqnarray*}
P_{N(T)}^\perp N(\overline{T})&=&(I-T^\dag T)N(\overline{T})\\
&=&(I-T^\dagger\overline{T}+T^\dagger\delta T)N(\overline{T})\\
&=&(I+T^\dagger\delta T)N(\overline{T})
 \end{eqnarray*} and by the Banach Lemma, the operator
$$W=I-P_{N(T)}^\perp+P_{N(\overline{T})}^\perp P_{N(T)}^\perp=I+(P_{N(\overline{T})}^\perp-P_{N(T)}^\perp)P_{N(T)}^\perp $$
is invertible and its inverse $W^{-1}: X\to X$ is bounded.
Take any $y\in
R(\overline{T})\cap N(T^\dagger)$, then $y=\overline{T}x$ and $
T^\dagger\overline{T}x=0,$ where $ x\in X$. Hence
$$T(I+T^\dagger \delta T)x=T(I+T^\dagger\overline{T}-T^\dagger T)x=0,$$
 which implies $(I+T^\dagger \delta T)x\in N(T).$
Therefore
\begin{eqnarray}
 (I+T^\dagger \delta T)x
&=&P_{N(T)}^\perp(I+T^\dagger \delta T)x\nonumber\\
&=&P_{N(T)}^\perp WW^{-1}(I+T^\dagger \delta T)x\nonumber\\
&=&P_{N(T)}^\perp(I-P_{N(T)}^\perp+P_{N(\overline{T})}^\perp P_{N(T)}^\perp)W^{-1}(I+T^\dagger \delta T)x\nonumber\\
&=&P_{N(T)}^\perp P_{N(\overline{T})}^\perp P_{N(T)}^\perp W^{-1}(I+T^\dagger \delta T)x\nonumber\\
&\in& P_{N(T)}^\perp N(\overline{T})=(I+T^\dagger\delta T)N(\overline{T}).\nonumber
 \end{eqnarray}
Since $T^\dagger\delta T+I$ is invertible, we get $y=\overline{T}x=0$. Thus we obtain (5).
\end{proof}

\begin{theorem}\label{The3.2} Let $X$, $Y$ be Hilbert spaces, $T\in B(X,Y)$ be a finite rank operator and $T^\dagger\in B(Y,X)$ be a
Moore--Penrose inverse of $T$. If $\delta T\in B(X,Y)$  satisfies
$\|\delta T\| \|T^\dagger\| <1$, then $\overline{T}=T+\delta T$ has
the Hyers--Ulam stability and the Hyers--Ulam stability constant
$K_{\overline{T}}$ satisfies $\lim\limits_{\|\delta T\|\to
0}K_{\overline{T}}=K_T$ if and only if
$${\rm Rank}\ \overline{T}={\rm Rank}\ T<+\infty.$$

\end{theorem}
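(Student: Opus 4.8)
The plan is to reduce the whole statement to the chain of equivalences already furnished by Theorem~\ref{The3.1}. First I would record the consequences of $T$ being of finite rank: its range is finite-dimensional, hence closed, so Theorem~\ref{The2.1} guarantees that $T$ has the Hyers--Ulam stability and a bounded Moore--Penrose inverse $T^\dagger$ with $N(T^\dagger)=R(T)^\perp$ and $R(T^\dagger)=N(T)^\perp$; set $n:={\rm Rank}\,T=\dim R(T)<\infty$. The assumption $\|\delta T\|\,\|T^\dagger\|<1$ is exactly the smallness hypothesis of Theorem~\ref{The3.1} (with $a=\|\delta T\|$, $b=0$), so that theorem is available, and in particular $G:=(I+\delta T T^\dagger)^{-1}$ is a bounded and boundedly invertible operator on $Y$ by the Banach Lemma. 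The assertion to be proved is that the conjunction ``$\overline{T}$ has the Hyers--Ulam stability and $\lim_{\|\delta T\|\to 0}K_{\overline{T}}=K_T$'', which is literally condition $(6)$ of Theorem~\ref{The3.1}, is equivalent to ${\rm Rank}\,\overline{T}={\rm Rank}\,T<\infty$. Since Theorem~\ref{The3.1} gives $(6)\Leftrightarrow(2)$, where $(2)$ reads $G\,R(\overline{T})=R(T)$, it suffices to prove that $(2)$ is equivalent to the rank condition.

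The ``only if'' direction is then immediate: an invertible operator carries a subspace onto a subspace of the same dimension, so $G\,R(\overline{T})=R(T)$ forces $\dim R(\overline{T})=\dim R(T)=n<\infty$, i.e. ${\rm Rank}\,\overline{T}={\rm Rank}\,T<\infty$.

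The substance of the theorem lies in the ``if'' direction, where I assume ${\rm Rank}\,\overline{T}={\rm Rank}\,T=n<\infty$ and must recover $(2)$. The key algebraic ingredient is the identity established inside the proof of Lemma~\ref{Lem2.4}, namely $(I+\delta T T^\dagger)TT^\dagger=\overline{T}T^\dagger$; applying $G$ on the left gives $G\overline{T}T^\dagger=TT^\dagger$, and multiplying on the right by $T$ together with $TT^\dagger T=T$ yields $G\overline{T}T^\dagger T=T$. Consequently every vector of $R(T)$ is of the form $G\overline{T}(T^\dagger T x)$, so $R(T)\subseteq R(G\overline{T})=G\,R(\overline{T})$. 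On the other hand, since $G$ is injective, $\dim G\,R(\overline{T})=\dim R(\overline{T})=n=\dim R(T)$; thus $R(T)$ sits inside $G\,R(\overline{T})$ as a subspace of equal finite dimension and the inclusion must be an equality. This is precisely condition $(2)$, whence Theorem~\ref{The3.1} returns condition $(6)$ and the proof closes.

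The only place demanding care is the ``if'' direction, and within it the inclusion $R(T)\subseteq G\,R(\overline{T})$; everything else is bookkeeping with the Moore--Penrose relations and the elementary dimension count. It is worth stressing that finiteness of the rank enters decisively exactly at this step, since it is what upgrades the inclusion, combined with equality of dimensions, into the equality $G\,R(\overline{T})=R(T)$ that condition $(2)$ demands.
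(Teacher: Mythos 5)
Your proof is correct. Both you and the paper reduce the statement to Theorem~\ref{The3.1} and use the finite rank hypothesis to upgrade an inclusion of subspaces of equal finite dimension to an equality, but you verify different members of the equivalence chain. The paper targets condition $(5)$, $R(\overline{T})\cap N(T^\dagger)=\{0\}$: it invokes the decomposition $Y=R(\overline{T}B)\oplus N(T^\dagger)$ from Lemma~\ref{Lem2.4}, counts dimensions to conclude $R(\overline{T}B)=R(\overline{T})$, and reads off the trivial intersection. You target condition $(2)$, $(I+\delta T T^\dagger)^{-1}R(\overline{T})=R(T)$: from the identity $(I+\delta T T^\dagger)TT^\dagger=\overline{T}T^\dagger$ you derive $T=G\overline{T}T^\dagger T$ with $G=(I+\delta T T^\dagger)^{-1}$, hence $R(T)\subseteq G\,R(\overline{T})$, and the dimension count finishes. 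Your route is marginally more self-contained, since it needs only the one algebraic identity rather than the full direct-sum structure of Lemma~\ref{Lem2.4}; the paper's route has the advantage of exhibiting explicitly why the transversality condition $R(\overline{T})\cap N(T^\dagger)=\{0\}$ holds, which is the condition that generalizes beyond the finite rank setting. Your treatment of the necessity direction (an invertible operator preserves dimension) matches the paper's one-line appeal to $(6)\Rightarrow(2)$.
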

\begin{proof} The necessity follows from $(6)\Rightarrow (2)$ in Theorem \ref{The3.1}. Next, we shall show the sufficiency. By Lemma \ref{Lem2.3}, $\overline{T}B$ is a projector from $Y$ onto $R(\overline{T}B)$. Then
$$R(T)\dot{+} N(T^\dagger)=Y=R(\overline{T}B)\oplus N(B)=R(\overline{T}B)\oplus N(T^\dagger).$$
Noting $\dim R(T)={\rm codim}N(T^\dagger)=\dim R(\overline{T}B)$, we have $\dim R(\overline{T})=\dim R(\overline{T}B)$, so $R(\overline{T}B)=R(\overline{T})$. Hence by $R(\overline{T}B)\cap N(T^\dagger)=R(\overline{T}B)\cap N(B)=\{0\}$, we get $R(\overline{T})\cap N(T^\dagger)=\{0\}$.\end{proof}
\begin{theorem}\label{The3.3} Let $X$, $Y$ be Hilbert spaces and let $T\in B(X,Y)$ be a semi-Fredholm operator. If $T$ has the bounded Moore--Penrose inverse $T^\dagger\in B(Y,X)$ and $\delta T\in
B(X,Y)$ satisfies $\|\delta T\|\|T^\dagger\|<1$, then $\overline{T}=T+\delta T$ has the Hyers--Ulam stability and the Hyers--Ulam stability constant $K_{\overline{T}}$
satisfies $\lim\limits_{\|\delta T\|\to 0}K_{\overline{T}}=K_T$ if and only if
 $$ either \ \dim N(\overline{T})=\dim N(T)<\infty\ or \ {\rm codim}R(\overline{T})={\rm codim}R(T)<\infty.$$
\end{theorem}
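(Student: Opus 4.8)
The plan is to identify the two statements being compared with specific conditions in Theorem~\ref{The3.1}. Since $T$ is semi-Fredholm it has closed range, so by Theorem~\ref{The2.1} the bounded Moore--Penrose inverse $T^\dagger$ exists and the hypotheses are consistent; throughout I would take $T^+=T^\dagger$, $a=\|\delta T\|$ and $b=0$, so that $a\|T^+\|+b\|TT^+\|=\|\delta T\|\,\|T^\dagger\|<1$ and Theorems~\ref{The2.2}, \ref{The2.3} and \ref{The3.1} all apply to $\overline{T}$. The key observation is that the assertion ``$\overline{T}$ has the Hyers--Ulam stability and $\lim_{\|\delta T\|\to 0}K_{\overline{T}}=K_T$'' is precisely condition $(6)$ of Theorem~\ref{The3.1}, hence equivalent to each of conditions $(1)$--$(8)$ there.

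For necessity, I would assume condition $(6)$ and combine the equivalences of Theorem~\ref{The3.1} with the semi-Fredholm dichotomy. If $T$ is left semi-Fredholm, so that $\dim N(T)<\infty$, then condition $(4)$ gives $(I+T^\dagger\delta T)^{-1}N(T)=N(\overline{T})$; since $\|T^\dagger\delta T\|\leq\|T^\dagger\|\,\|\delta T\|<1$, the operator $I+T^\dagger\delta T$ is invertible by the Banach Lemma, so it restricts to a linear isomorphism of $N(T)$ onto $N(\overline{T})$ and $\dim N(\overline{T})=\dim N(T)<\infty$. If instead $T$ is right semi-Fredholm, so that $\mathrm{codim}\,R(T)<\infty$, then condition $(2)$ gives $(I+\delta T T^\dagger)^{-1}R(\overline{T})=R(T)$; as $I+\delta T T^\dagger$ is likewise invertible, it induces a linear isomorphism of the quotients $Y/R(\overline{T})\to Y/R(T)$, whence $\mathrm{codim}\,R(\overline{T})=\mathrm{codim}\,R(T)<\infty$. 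In either case the displayed dimension/codimension condition follows.

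For sufficiency, I would assume the dimension/codimension condition, which is exactly the hypothesis of Theorem~\ref{The2.3} with $T^+=T^\dagger$. That theorem already yields that $\overline{T}$ has the Hyers--Ulam stability; more to the point, its proof establishes $R(\overline{T})\cap N(T^+)=\{0\}$ in both cases, which is condition $(5)$ of Theorem~\ref{The3.1} since $N(T^+)=N(T^\dagger)$. Invoking the implication $(5)\Rightarrow(6)$ of Theorem~\ref{The3.1} then delivers both the Hyers--Ulam stability of $\overline{T}$ and the continuity $\lim_{\|\delta T\|\to 0}K_{\overline{T}}=K_T$, completing this direction.

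The main obstacle I anticipate lies in the necessity direction: one must resist trying to prove both finiteness conditions simultaneously and instead exploit the dichotomy, transferring the relevant finite-dimensional data through the correct invertible intertwiner, namely $I+T^\dagger\delta T$ on the domain side to control $N(\overline{T})$ and $I+\delta T T^\dagger$ on the range side to control $\mathrm{codim}\,R(\overline{T})$. Verifying that these invertible operators genuinely preserve the dimension of the null space and the codimension of the range, rather than merely mapping one subspace into another, is the point requiring care, but both follow cleanly once conditions $(4)$ and $(2)$ of Theorem~\ref{The3.1} are available.
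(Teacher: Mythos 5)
Your proposal is correct and follows essentially the same route as the paper: identify the continuity statement with condition $(6)$ of Theorem~\ref{The3.1}, get sufficiency from Theorem~\ref{The2.3} via condition $(5)$, and get necessity from the semi-Fredholm dichotomy using the equivalent conditions. The only (cosmetic) difference is in the codimension case of necessity, where you pass through condition $(2)$ and the quotient isomorphism induced by the invertible operator $I+\delta T\,T^\dagger$, while the paper uses condition $(1)$ and the decomposition $Y=R(\overline{T})\oplus N(T^\dagger)$ to read off ${\rm codim}\,R(\overline{T})=\dim N(T^\dagger)={\rm codim}\,R(T)$; both are valid and equally short.
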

\begin{proof}
It is easy to see that the sufficiency follows from Theorem \ref{The2.3}. Next we shall prove the necessity. If $\dim N(T)<\infty$, then by  $(6)\Rightarrow (4)$ in Theorem \ref{The3.1}, we can see $\dim N(\overline{T})=\dim N(T)$. If ${\rm codim}R(T)<\infty$, then by Theorem \ref{The3.1}, $T^\dagger(I+\delta TT^\dagger)^{-1}:\ Y\to X$
 is a bounded generalized inverse of $T+\delta T$. Hence from Lemma \ref{Lem2.4}, $N(B)=N(T^\dagger)$ and $$Y=R(T)\dot{+} N(T^\dagger)=R(\overline{T})\oplus N(B)=R(\overline{T})\oplus N(T^\dagger).$$
 Thus $\dim N(T^\dagger)={\rm codim}R(T)<\infty$ and therefore ${\rm codim}R(\overline{T})={\rm codim}R(T).$
\end{proof}

\end{document}